\documentclass[reqno]{amsart}
\usepackage{amsmath}
\usepackage{amssymb}
\usepackage{amsthm}
\usepackage{tikz,tikz-cd}
\usepackage{subfig}
\usepackage{mathrsfs}

\usepackage{hyperref}

\usetikzlibrary{positioning}
\usetikzlibrary{arrows}
%\tikzstyle{every path}=[line width=2pt]

\pagestyle{plain}
\title{A local characterization of $B_2$ regular crystals}

\author{Shunsuke Tsuchioka}
\address{Graduate School of Mathematical Sciences, University of Tokyo,
Komaba, Meguro, Tokyo, 153-8914, Japan}
%\thanks{The first author was supported in part by JSPS Kakenhi Grants 26800005.}
\email{tshun@kurims.kyoto-u.ac.jp}

\date{Oct 30, 2019}
\keywords{Kashiwara crystal,
highest weight crystal,
regular crystal,
quantum groups,  
Stembridge crystal,
%Rogers-Ramanujan identities,
%Rogers-Ramanujan partition theorem,
%Schur partition theorem,
Lusztig parameterization}
%Kac-Moody Lie algebras}
%generalized Cartan matrices}
\subjclass[2010]{Primary~20G42, Secondary~05E10}

\newtheorem{Thm}{Theorem}[section]
\newtheorem{Def}[Thm]{Definition}

\newtheorem{Prop}[Thm]{Proposition}
\newtheorem{Lem}[Thm]{Lemma}

\newtheorem{Rem}[Thm]{Remark}
\newtheorem{Cor}[Thm]{Corollary}
\newtheorem{Ex}[Thm]{Example}

\newcommand{\TRANS}[1]{{}^{{t}}{#1}}
\newcommand{\CHECK}{\vee}

\newcommand{\ISOM}{\stackrel{\sim}{\longrightarrow}}
\newcommand{\ZERO}{\boldsymbol{0}}
\newcommand{\Z}{\mathbb{Z}}

\DeclareMathOperator{\HOM}{Hom}
\DeclareMathOperator{\RANK}{rank}

\newcommand{\EFU}{h}
\newcommand{\KE}[1]{\tilde{e}_{#1}}
\newcommand{\KF}[1]{\tilde{f}_{#1}}
\newcommand{\KG}[1]{\tilde{g}_{#1}}
\newcommand{\LusR}{R}
\newcommand{\LusRR}{R^{-1}}
\DeclareMathOperator{\WT}{wt_0}
\DeclareMathOperator{\WTT}{wt}
\DeclareMathOperator{\WTTT}{\mathsf{wt}}

\DeclareMathOperator{\DIST}{depth}

\begin{document}
\maketitle

\begin{abstract}
Stembridge characterized regular crystals associated with a simply-laced generalized Cartan matrix (GCM) in terms of local graph-theoretic quantities.
We give a similar axiomatization for $B_2$ regular crystals and thus for regular crystals associated with a 
finite GCM except $G_2$ and 
an affine GCM except $A^{(1)}_{1},G^{(1)}_{2},A^{(2)}_{2},D^{(3)}_4$.
%As we will review in \S\ref{prevstu}, finding a set of local axioms that characterizes $B_2$ regular crystals has been an open problem. % since ~\cite{Ste} inspite of ~\cite{DKK}.
%Our motivation comes from a generalization of Schur partition theorem by the author jointly with Masaki Watanabe proved indirectly via theory of perfect crystal.
\end{abstract}

\section{Introduction}\label{intro}

\subsection{Kashiwara crystals}\label{defkc}
Let $A=(a_{ij})_{i,j\in I}$ be a symmetrizable generalized Cartan matrix (GCM) in the sense of ~\cite[\S1.1,\S2.1]{Kac}.
In this paper, $I$ is always assumed to be a finite set.
We fix a Cartan datum for $A$, which is a 4-tuple $(P,P^{\CHECK},\Pi,\Pi^{\CHECK})$ such that
\begin{enumerate}
\item $P$ is a free $\Z$-module of rank $2|I|-\RANK A$ and $P^{\CHECK}=\HOM_{\Z}(P,\Z)$,
\item $\Pi=\{\alpha_i\mid i\in I\}$ (resp. $\Pi^{\CHECK}=\{h_i\mid i\in I\}$) are $\Z$-linearly independent elements in $P$ (resp. $P^{\CHECK}$) such that
$a_{ij}=\langle h_i,\alpha_j\rangle$ for all $i,j\in I$, where $\langle,\rangle:P^\vee\times P\to\Z$ is the canonical pairing.
%\item $\Pi^{\CHECK}=\{h_i\mid i\in I\}$ are $\Z$-linearly independent elements in $P^{\CHECK}$,
%\item $\Pi=\{\alpha_i\mid i\in I\}$ (resp. $\Pi^{\CHECK}=\{h_i\mid i\in I\}$) are $\Z$-linearly independent elements in $P$ (resp. $P^{\CHECK}$),
%\item $a_{ij}=\alpha_j(h_i)$ for $i,j\in I$.
\end{enumerate}

%Let $A=(a_{ij})_{i,j\in I}$ be a symmetrizable GCM and fix
%\begin{itemize}
%\item a free $\Z$-module $P$ such that $P\cong\Z^{\oplus(|I|+\CORANK(A))}$ and $P^\vee=\HOM_{\Z}(P,\Z)$
%\item $\Z$-linearly independent elements $\{\alpha_i\mid i\in I\}\subseteq P$ and $\{h_i\mid i\in I\}\subseteq P^\vee$
%\end{itemize}
%such that $\forall i\in I,\forall j\in I,a_{ij}=\langle h_i,\alpha_j\rangle$. Here $\langle,\rangle:P^\vee\times P\to\Z$ is the canonical pairing 
%and $\CORANK(A):=|I|-\RANK(A)$. In the paper, we always assume $I$ is finite.

A Kashiwara crystal is a 6-tuple $(B,\WTTT,({\KE{i}})_{i\in I},({\KF{i}})_{i\in I},(\varepsilon_i)_{i\in I},(\varphi_i)_{i\in I})$, 
where $B$ is a set and $\WTTT:B\to P,\varepsilon_i,\varphi_i:B\to\mathbb{Z}\sqcup\{-\infty\},\KE{i},\KF{i}:B\to B\sqcup\{\ZERO\}$ are functions
that satisfy the axioms (K1)--(K5) given below (see also ~\cite[\S7.2]{Kas3}). In (K1)--(K3), we understand $a+(-\infty)=(-\infty)+a=-\infty$ for $a\in\Z$.
\begin{enumerate}
\item[(K1)] $\forall i\in I,\forall b\in B,\varphi_i(b)=\varepsilon_i(b)+\langle h_i,\WTTT(b)\rangle$.
\item[(K2)] \mbox{$\forall i\in I,\forall b\in B,\KE{i}b\ne\ZERO\Rightarrow \WTTT(\KE{i}b)=\WTTT(b)+\alpha_i,\varepsilon_i(\KE{i}b)=\varepsilon_i(b)-1,\varphi_i(\KE{i}b)=\varphi_i(b)+1$}.
\item[(K3)] \mbox{$\forall i\in I,\forall b\in B,\KF{i}b\ne\ZERO\Rightarrow \WTTT(\KF{i}b)=\WTTT(b)-\alpha_i,\varepsilon_i(\KF{i}b)=\varepsilon_i(b)+1,\varphi_i(\KF{i}b)=\varphi_i(b)-1$}.
\item[(K4)] $\forall i\in I,\forall b\in B,\forall b'\in B,\KE{i}b=b'\Leftrightarrow b=\KF{i}b'$.
\item[(K5)] $\forall i\in I,\forall b\in B,\varphi_i(b)=-\infty\Rightarrow \KE{i}b=\KF{i}b=\ZERO$.
\end{enumerate}
%where $B$ is a set and $\WTT:X\to P,\varepsilon_i,\varphi_i:B\to\mathbb{Z}\sqcup\{-\infty\},\KE{i},\KF{i}:B\to B\sqcup\{\ZERO\}$ are functions.

\begin{Rem}\label{shorthand}
In this paper, we use shorthand such as $\forall,\exists,\Leftrightarrow$ when we state axioms
because the usage has some advantage in that
%Even if the usage makes the paper look unprofessional,
the presentation becomes unambiguous, mathematically formal and more translatable to a computer implementation.
\end{Rem}

\subsection{Highest weight crystals and regular crystals}
For a dominant integral weight $\lambda\in P^+$ (i.e., $\forall i\in I,\langle h_i,\lambda\rangle\geq 0$), %:=\{\mu\in P\mid \forall i\in I,\langle h_i,\mu\rangle\geq 0\}$,
Kashiwara established the existence and uniqueness of the crystal basis $B(\lambda)$ (called 
the highest weight crystal) of the %irreducible
integrable highest weight representation $V(\lambda)$ of the quantum group $U_q(A)$~\cite[Theorem 2]{Kas2}.
Because Kashiwara's theorems are proved using a power of abstraction of representation theory,
some prefer %more 
elementary approaches.

A Kashiwara crystal 
is called \emph{regular}(see ~\cite[\S2.2]{Kas4}) if 
it is isomorphic as $A|_{i,j}$-crystals to the crystal basis of an integrable $U_q(A|_{i,j})$-module for any
$i\ne j\in I$ such that 
\begin{align*}
A|_{i,j}:=\begin{pmatrix} a_{ii} & a_{ij} \\ a_{ji} & a_{jj}\end{pmatrix}=A_1\oplus A_1,A_2,B_2,{}^tB_2,G_2,{}^tG_2.
\end{align*}
Assuming a certain boundedness condition~\cite[(2.4.1)]{KKMMNN2}, 
regular crystal is a disjoint union of the highest weight crystals~\cite[Proposition 2.4.4]{KKMMNN2}.

%The regular crystals occupy a special position in Kashiwara crystal theory.
%They not only give representation-theoretic information such as tensor product multiplicities, branching coefficients, etc.
%but also bring insightful unification for classical, popular, but seemingly ad hoc constructions such as
%Young tableaux, the Littlewood-Richardson rule (see ~\cite[\S5]{Kas3}), etc. and they allows generalizations (e.g.,~\cite{KN,GJKKK}).

%In the rest, we do not assume that readers are familiar with quantum grous, Kashiwara's crystal, etc
%and just treat $B(\lambda)$ as an $I$-colored directed graph of special kinds.

\subsection{Crystal graphs}
A Kashiwara crystal $(B,\WTTT,({\KE{i}})_{i\in I},({\KF{i}})_{i\in I},(\varepsilon_i)_{i\in I},(\varphi_i)_{i\in I})$ gives an $I$-colored directed graph (the crystal graph of $B$) by the rule:
\begin{quotation}
there is an $i$-colored arrow from $x$ to $y$ if and only if $\KF{i}x=y$.
\end{quotation}

\begin{Def}\label{goodgraph}
An $I$-colored directed graph $X$ is \emph{good} if for any $x\in X$ and $i\in I$
\begin{enumerate}
\item[(G1)] there is at most one $i$-colored arrow from $x$, 
\item[(G2)] there is at most one $i$-colored arrow to $x$, 
\item[(G3)] the length of the $i$-string through $x$ is finite.
\end{enumerate}
\end{Def}

When there is an $i$-colored arrow from $x$ to $y$ in a good $I$-colored directed graph $X$, we define as $\KF{i}x=y$ and $\KE{i}y=x$.
$\KF{i}x=\ZERO$ (resp. $\KE{i}x=\ZERO$) means that
there is no $i$-colored arrow from $x$ (resp. to $x$).
Thanks to the axioms, 
\begin{align*}
\varphi_i(x) = \max\{m\geq 0\mid \KF{i}^mx\ne\ZERO\},\quad
\varepsilon_i(x) = \max\{m\geq 0\mid \KE{i}^mx\ne\ZERO\}
\end{align*}
are well-defined (i.e., take finite values).
The crystal graph of a highest weight crystal $B(\lambda)$ is good
and the quantities $\varepsilon_i,\varphi_i$ are the same as above~\cite[(2.4.1)]{Kas2}.

\begin{Def}\label{maxelem}
Let $X$ be a good $I$-colored directed graph. We say that $x_0\in X$ is a \emph{maximum element} if
\begin{enumerate}
\item[(M1)] $\forall i\in I,\KE{i}x_0=\ZERO$ (i.e., $\varepsilon_i(x_0)=0$),
\item[(M2)] $\forall x\in X,\exists s\geq 0,\exists (i_1,\cdots,i_s)\in I^s,\KF{i_1}\cdots\KF{i_s}x_0=x$ (i.e., $\KE{i_s}\cdots\KE{i_1}x=x_0$).
\end{enumerate}
\end{Def}

\begin{Def}\label{quanef}
Let $X$ be a good $I$-colored directed graph. We define 
\begin{align*}
\Delta^g_{\beta}(i,j,x) &= \beta_j(\KG{i}x)-\beta_j(x)
\end{align*}
for $g\in\{e,f\}$, $\beta\in\{\varepsilon,\varphi\}$ and $x\in X$, $i,j\in I$
with $\KG{i}x\ne\ZERO$.
\end{Def}

\subsection{Stembridge crystals}\label{stc}
For a symmetrizable GCM $A$,
Stembridge proved the $A$-regularity of highest weight crystals via Littelmann's path model~\cite{Li1,Li2}. % (see Theorem \ref{StembridgeCrystal}).

\begin{Thm}[{\cite[Definition 1.1, Proposition 2.4]{Ste}}]\label{StembridgeCrystal}
Let $A=(a_{ij})_{i,j\in I}$ be a symmetrizable GCM. 
For a dominant integral weight $\lambda\in P^+$,
the highest weight crystal $B(\lambda)$ is an $A$-regular graph %(defined by the axioms (S1)--(S5) below, see ~\cite[Definition 1.1]{Ste})
having a maximum element $b_{\lambda}\in B(\lambda)$ with $\varphi_i(b_{\lambda})=\langle h_i,\lambda\rangle$ for all $i\in I$.
An $A$-regularity of a directed graph $X$ is defined by the axioms (S1)--(S5) given below (see also Remark \ref{shorthand}).
\end{Thm}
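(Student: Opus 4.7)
The plan is to leverage Littelmann's path model~\cite{Li1,Li2}, which realizes $B(\lambda)$ as the set of LS paths with endpoint $\lambda$, with the Kashiwara operators $\KE{i}, \KF{i}$ given by Littelmann's root operators. The straight-line path $\pi_\lambda(t) = t\lambda$ is manifestly a highest weight element (annihilated by every $\KE{i}$), and a direct computation gives $\varphi_i(\pi_\lambda) = \langle h_i,\lambda\rangle$. Littelmann's generation theorem shows that every LS path with endpoint $\lambda$ is reached from $\pi_\lambda$ by successive applications of $\KF{i}$'s, so $\pi_\lambda$ is the desired maximum element $b_\lambda$ satisfying (M1)--(M2); the good-graph properties (G1)--(G3) follow from the general theory of root operators.

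For the $A$-regularity statement, the key observation is that each of the axioms (S1)--(S5) is \emph{local in two colors} $i,j$: its hypothesis and conclusion mention only $\KE{i},\KF{i},\KE{j},\KF{j}$ at a single vertex and its immediate neighbors. Since the root operators on paths are compatible with restriction to the rank-two subsystem generated by $\{\alpha_i, \alpha_j\}$, it suffices to verify each axiom within this rank-two context. This reduces the problem to a case analysis on the type of the submatrix $\bigl(\begin{smallmatrix} 2 & a_{ij} \\ a_{ji} & 2\end{smallmatrix}\bigr)$: the commuting case $a_{ij}a_{ji}=0$, the simply-laced case $a_{ij}=a_{ji}=-1$, and the non-simply-laced cases (including $B_2$ and $G_2$).

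Within each rank-two type, the local structure of LS paths is explicit and finite, so the axioms can be verified by tabulation. Stembridge handles the simply-laced case in his original paper; the commuting case is immediate; the non-simply-laced cases require more care but only finitely many local configurations. The main obstacle is bookkeeping the quantities $\Delta^g_\beta(i,j,x)$ from Definition~\ref{quanef} in the non-simply-laced setting, where $i$-strings and $j$-strings are longer and interact more intricately than in $A_2$. Once these quantities are tabulated along each two-colored edge, verification of each of (S1)--(S5) is a routine inspection.
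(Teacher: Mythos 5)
The paper offers no proof of this theorem: it is quoted verbatim from Stembridge (\cite[Definition 1.1, Proposition 2.4]{Ste}), and the surrounding text records only that Stembridge proved it via Littelmann's path model. So your overall route --- realize $B(\lambda)$ by LS paths, take $\pi_\lambda(t)=t\lambda$ as the maximum element, and reduce each axiom to the rank-two subsystem on colors $i,j$ --- is exactly the strategy the cited source uses, and the first half of your sketch (the maximum element, (M1)--(M2), (G1)--(G3)) is fine. Note, though, that you also silently use the identification of the LS-path crystal with Kashiwara's $B(\lambda)$, which is itself a nontrivial theorem (Joseph, Kashiwara) that should be invoked explicitly.

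The genuine gap is in your last step, which is where all the content of (S2)--(S5) actually lives. You claim that ``within each rank-two type, the local structure of LS paths is explicit and finite, so the axioms can be verified by tabulation,'' and your trichotomy lists only the commuting, simply-laced, and finite non-simply-laced ($B_2$, $G_2$) cases. But the theorem is asserted for an arbitrary symmetrizable GCM, whose rank-two submatrices may be of affine or indefinite type (e.g.\ $a_{ij}a_{ji}\geq 4$); the corresponding rank-two components of $B(\lambda)$ are then infinite, and there is no finite list of local configurations to inspect --- producing such a finite classification of neighborhoods is essentially equivalent to what is to be proved. Even in finite type the word ``tabulation'' hides the real work: $\lambda$ and the vertex $x$ range over infinite sets, so one must compute over parameterized families (this is precisely what the present paper does in \S\ref{kakunin} for the new axioms (S6)--(S9), via the Lusztig parameterization, and it is several pages of explicit formulas). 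Stembridge's actual proof of his Proposition 2.4 is not a case check but a uniform analysis of how the root operators deform the piecewise-linear functions $t\mapsto\langle h_j,\pi(t)\rangle$; some argument of that kind, valid uniformly in the GCM, is what your sketch is missing.
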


\begin{enumerate}
\item[(S1)] $X$ is a good $I$-colored directed graph in the sense of Definition \ref{goodgraph}.
\item[(S2)] $\forall x\in X,\forall i\in I,\KE{i}x\ne\ZERO \Rightarrow \forall j\in I\setminus\{i\},\Delta^e_{\varphi}(i,j,x)-\Delta^e_{\varepsilon}(i,j,x)=a_{ji}$.
\item[(S3)] $\forall x\in X,\forall i\in I,\KE{i}x\ne\ZERO \Rightarrow \forall j\in I\setminus\{i\},\Delta^e_{\varphi}(i,j,x)\leq 0\leq\Delta^e_{\varepsilon}(i,j,x)$.
\item[(S4)] $\forall i\ne\forall j\in I,\forall x\in X$,$\KE{i}x\ne\ZERO\ne\KE{j}x\Rightarrow$ (A$^-_{i,j}$),(A$^-_{j,i}$),(B$^-$).
\item[(S5)] $\forall i\ne\forall j\in I,\forall x\in X$,$\KF{i}x\ne\ZERO\ne\KF{j}x\Rightarrow$ (A$^+_{i,j}$),(A$^+_{j,i}$),(B$^+$).
\end{enumerate}

\noindent{(A$^-_{k,\ell}$)} 
\mbox{$\Delta^e_{\varepsilon}(k,\ell,x)=0\Rightarrow\exists z=\KE{\ell}\KE{k}x=\KE{k}\KE{\ell}x,\Delta^f_{\varphi}(\ell,k,z)=0$}.

\noindent{(B$^-$)} 
\mbox{$(\Delta^e_{\varepsilon}(i,j,x),\Delta^e_{\varepsilon}(j,i,x))=(1,1)\Rightarrow\exists z=\KE{i}\KE{j}^2\KE{i}x=\KE{j}\KE{i}^2\KE{j}x,(\Delta^f_{\varphi}(i,j,z),\Delta^f_{\varphi}(j,i,z))=(1,1)$}.

\noindent{(A$^+_{k,\ell}$)} 
\mbox{$\Delta^f_{\varphi}(k,\ell,x)=0\Rightarrow\exists z=\KF{\ell}\KF{k}x=\KF{k}\KF{\ell}x,\Delta^e_{\varepsilon}(\ell,k,z)=0$}.

\noindent{(B$^+$)} 
\mbox{$(\Delta^f_{\varphi}(i,j,x),\Delta^f_{\varphi}(j,i,x))=(1,1)\Rightarrow\exists z=\KF{i}\KF{j}^2\KF{i}x=\KF{j}\KF{i}^2\KF{j}x,(\Delta^e_{\varepsilon}(i,j,z),\Delta^e_{\varepsilon}(j,i,z))=(1,1)$}.

\vspace{2mm}

\noindent{\bf Convention.} 
In the definition, for example, $\exists z=\KE{i}\KE{j}^2\KE{i}x=\KE{j}\KE{i}^2\KE{j}x$ in (B$^-$) means
\begin{enumerate}
\item $\KE{i}x,\KE{j}\KE{i}x,\KE{j}\KE{j}\KE{i}x,\KE{i}\KE{j}\KE{j}\KE{i}x,\KE{j}x,\KE{i}\KE{j}x,\KE{i}\KE{i}\KE{j}x,\KE{j}\KE{i}\KE{i}\KE{j}x\ne\ZERO$,
\item $\KE{i}\KE{j}\KE{j}\KE{i}x=\KE{j}\KE{i}\KE{i}\KE{j}x$ and we name it $z$.
\end{enumerate}
\begin{Rem}\label{abb}
As in ~\cite[p.4810]{Ste}, (1) has a redundancy in that
some are forced automatically 
(i.e., $\KE{i}x,\KE{j}\KE{i}x,\KE{j}\KE{j}\KE{i}x,\KE{j}x,\KE{i}\KE{j}x,\KE{i}\KE{i}\KE{j}x\ne\ZERO$ follows from $(\Delta^e_{\varepsilon}(i,j,x),\Delta^e_{\varepsilon}(j,i,x))=(1,1)$ and $\varepsilon_i(x),\varepsilon_j(x)\geq 1$).
We have similar remarks on (A$^{\pm}_{k,\ell}$).
However we will not consider minimization of axioms and
use freely abbreviations \mbox{involving $\exists$.} % throughout the paper.
\end{Rem}
%~\cite[pp.4810]{Ste}

\begin{Rem}\label{henkanef}
Note $\Delta^e_{\varepsilon}(i,i,x)=-1,\Delta^e_{\varphi}(i,i,x)=1,\Delta^f_{\varepsilon}(i,i,x)=1,\Delta^f_{\varphi}(i,i,x)=-1$ if defined.
Also note $\Delta^f_{\beta}(i,j,x)=-\Delta^e_{\beta}(i,j,\KF{i}x)$ for $\beta\in\{\varepsilon,\varphi\}$ if defined.
Thus, we can replace (S2) (resp. (S3)) with (S2$_{k}$) for $k\in\{a,b,c\}$ (resp. (S3')). 
\begin{enumerate}
\item[(S2$_a$)] $\forall x\in X,\forall i\in I,\KE{i}x\ne\ZERO \Rightarrow \forall j\in I,\Delta^e_{\varphi}(i,j,x)-\Delta^e_{\varepsilon}(i,j,x)=a_{ji}$.
\item[(S2$_b$)] $\forall x\in X,\forall i\in I,\KF{i}x\ne\ZERO \Rightarrow \forall j\in I\setminus\{i\},\Delta^f_{\varepsilon}(i,j,x)-\Delta^f_{\varphi}(i,j,x)=a_{ji}$.
\item[(S2$_c$)] $\forall x\in X,\forall i\in I,\KF{i}x\ne\ZERO \Rightarrow \forall j\in I,\Delta^f_{\varepsilon}(i,j,x)-\Delta^f_{\varphi}(i,j,x)=a_{ji}$.
\item[(S3')] $\forall x\in X,\forall i\in I,\KF{i}x\ne\ZERO \Rightarrow \forall j\in I\setminus\{i\},\Delta^f_{\varepsilon}(i,j,x)\leq 0\leq\Delta^f_{\varphi}(i,j,x)$.
\end{enumerate}
\end{Rem}
%\hspace{0mm}

%Stembridge found axioms that characterize $B(\lambda)$ in terms of graph-theoretic quantities when $A$ is simply-laced.
%Recall that a simply-laced GCM $A=(a_{ij})_{i,j\in I}$ is a symmetrizable GCM with 
%\begin{align*}
%\forall i\ne\forall j\in I, A|_{i,j}:=\begin{pmatrix} a_{ii} & a_{ij} \\ a_{ji} & a_{jj} \end{pmatrix}=A_1\oplus A_1=\begin{pmatrix} 2 & 0 \\ 0 & 2 \end{pmatrix},A_2=\begin{pmatrix} 2 & -1 \\ -1 & 2 \end{pmatrix}.
%\end{align*}

When $A$ is simply-laced, (S2), (S3) imply $(\Delta^e_{\varepsilon}(i,j,x),\Delta^e_{\varepsilon}(j,i,x))=(0,0),(0,1),(1,0),(1,1)$ (resp. $(\Delta^f_{\varphi}(i,j,x),\Delta^f_{\varphi}(j,i,x))=(0,0),(0,1),(1,0),(1,1)$) if $\KE{i}x\ne\ZERO\ne\KE{j}x$ (resp. $\KF{i}x\ne\ZERO\ne\KF{j}x$) with $i\ne j$.
Recall that $A=(a_{ij})_{i,j\in I}$ is simply-laced if
\begin{align*}
\forall i\ne\forall j\in I, 
A|_{i,j}:=\begin{pmatrix} a_{ii} & a_{ij} \\ a_{ji} & a_{jj} \end{pmatrix}=
\underbrace{\begin{pmatrix} 2 & 0 \\ 0 & 2 \end{pmatrix}}_{A_1\oplus A_1},
\underbrace{\begin{pmatrix} 2 & -1 \\ -1 & 2 \end{pmatrix}}_{A_2}.
%A_1\oplus A_1\left(=\begin{pmatrix} 2 & 0 \\ 0 & 2 \end{pmatrix}\right),A_2\left(=\begin{pmatrix} 2 & -1 \\ -1 & 2 \end{pmatrix}\right).
\end{align*}

%Recall that a simply-laced GCM $A=(a_{ij})_{i,j\in I}$ is a symmetrizable GCM with 

Stembridge's $A$-regularity characterizes simply-laced highest weight crystals.
\begin{Thm}[{\cite[Proposition 1.4, Theorem 3.3]{Ste}}]\label{StembridgeCrystal2}
Let $A=(a_{ij})_{i,j\in I}$ be a simply-laced GCM and
let $X$ be an $A$-regular graph with a maximum element $x_0\in X$.
Then, there exists a unique $I$-colored directed graph isomorphism between $X$ and $B(\lambda)$, where $\lambda\in P^+$ satisfies $\langle h_i,\lambda\rangle=\varphi_i(x_0)$ for all $i\in I$.
\end{Thm}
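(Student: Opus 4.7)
The plan is to construct a graph isomorphism $\Phi\colon X\to B(\lambda)$ by induction on depth, where $d(x)$ denotes the length of a shortest directed path from $x_0$ to $x$ (finite by (M2)). Set $\Phi(x_0):=b_{\lambda}$. For $x$ with $d(x)=n+1$, choose any expression $x=\KF{i}y$ with $d(y)=n$ and set $\Phi(x):=\KF{i}\Phi(y)$. I would carry along in the induction the stronger assertion that $\varepsilon_j(\Phi(x))=\varepsilon_j(x)$ and $\varphi_j(\Phi(x))=\varphi_j(x)$ for every $j\in I$; in particular this forces $\KF{i}\Phi(y)\ne\ZERO$, so the recursive definition lands in $B(\lambda)$.

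The main obstacle is the well-definedness of $\Phi$: whenever $x=\KF{i}y=\KF{j}z$ with $i\ne j$ and $d(y)=d(z)=n$, one must verify $\KF{i}\Phi(y)=\KF{j}\Phi(z)$. Since $\KE{i}x=y\ne\ZERO\ne z=\KE{j}x$, axioms (S2) and (S3) together with the simply-laced hypothesis on $A|_{i,j}$ restrict $(\Delta^e_{\varepsilon}(i,j,x),\Delta^e_{\varepsilon}(j,i,x))$ to one of $(0,0),(0,1),(1,0),(1,1)$. In the first three cases, (S4) via (A$^-$) produces a common preimage $w=\KE{i}\KE{j}x=\KE{j}\KE{i}x$ of depth $n-1$; in the $(1,1)$ case, (S4) via (B$^-$) produces $w=\KE{i}\KE{j}^2\KE{i}x=\KE{j}\KE{i}^2\KE{j}x$ of depth $n-3$. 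By the induction hypothesis $\Phi(w)$ is defined and matches $w$ on all $\varepsilon$- and $\varphi$-data, so the hypotheses of the dual axioms (A$^+$) or (B$^+$) are satisfied at $\Phi(w)$ inside $B(\lambda)$, which is itself $A$-regular by Theorem \ref{StembridgeCrystal}; chasing back through the resulting diamond (resp.\ hexagon) yields $\KF{i}\Phi(y)=\KF{j}\Phi(z)$ as well as the required agreement of $\varepsilon_j,\varphi_j$ at $x$.

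Once $\Phi$ is well defined, the intertwining $\Phi(\KF{i}x)=\KF{i}\Phi(x)$ for $\KF{i}x\ne\ZERO$ holds by construction, and compatibility with $\KE{i}$ follows from (G1), (G2). Running the same construction with the roles of $X$ and $B(\lambda)$ swapped produces a map $\Psi\colon B(\lambda)\to X$; then $\Psi\circ\Phi$ fixes $x_0$ and commutes with every $\KF{i}$, hence equals the identity on $X$ by (M2), and symmetrically $\Phi\circ\Psi=\mathrm{id}$. Uniqueness is then immediate: any $I$-colored graph isomorphism $X\to B(\lambda)$ must send the unique maximum element $x_0$ to the unique maximum element $b_{\lambda}$, and is thereafter determined on all of $X$ by intertwining with the $\KF{i}$ and by (M2).
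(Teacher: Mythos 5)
Your construction is the same depth-induction that the paper uses for its analogous uniqueness statement (Proposition \ref{uniqprop}, following Stembridge's proof of \cite[Proposition 1.4]{Ste}): define $\Phi$ level by level via $\Phi(x)=\KF{i}\Phi(\KE{i}x)$, prove well-definedness by producing a common ancestor from (A$^-$)/(B$^-$) in $X$ and matching it with the diamond/hexagon that (A$^+$)/(B$^+$) forces in $B(\lambda)$, and carry the $\varepsilon_j,\varphi_j$-agreement through the induction. The case analysis on $(\Delta^e_{\varepsilon}(i,j,x),\Delta^e_{\varepsilon}(j,i,x))$ and the use of the $A$-regularity of $B(\lambda)$ from Theorem \ref{StembridgeCrystal} are exactly right.

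There is one genuine omission: you never establish that $X$ is graded by depth, i.e.\ that every directed path from $x_0$ to $x$ has the same length (equivalently, that $\WT(x)$ is well-defined). This is the content of Proposition \ref{confprop} in the paper (and of \cite[Proposition 1.2]{Ste}), proved by a separate induction using the local confluence supplied by (S4). Without it, your argument has three unjustified steps. First, a vertex $z$ with $\KF{j}z=x$ and $d(x)=n+1$ need not a priori satisfy $d(z)=n$, so your claim that the intertwining $\Phi(\KF{j}z)=\KF{j}\Phi(z)$ ``holds by construction'' covers only the predecessors you happened to use in defining $\Phi$. Second, the common ancestor $w=\KE{j}\KE{i}x$ (resp.\ $\KE{i}\KE{j}^2\KE{i}x$) is only known to satisfy $d(w)\geq n-1$ (resp.\ $\geq n-3$); without gradedness nothing prevents $d(w)>n$, in which case $\Phi(w)$ is not yet defined and the induction does not close. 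Third, the $\varphi_j$-half of your auxiliary claim at $x$ is most cleanly obtained from (S2) together with the weight function $\WTT(x)=\lambda-U(\WT(x))$ of Remark \ref{wkc}, which again presupposes that $\WT$ is well-defined. All of this is repaired by first proving the analogue of Proposition \ref{confprop}; the rest of your argument then goes through as written.
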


\begin{Ex}\label{fundamentalweight}
For a symmetrizable GCM $A=(a_{ij})_{i,j\in I}$ with $\det A\ne 0$, 
the fundamental weight $\Lambda_i$ is defined by $\langle h_j,\Lambda_i\rangle=\delta_{ij}$ for all $j\in I$.
The left (resp. right) figure below is the $A_2$-crystal $B(2\Lambda_1)$ (resp. $B(\Lambda_1+\Lambda_2)$) and
gives a visualization of (A$^{-}_{1,2}$) (resp. (B$^-$)).
Here, thick arrows are 1-arrows. 
\end{Ex}

\begin{center}
\scalebox{0.5}{
\begin{tikzpicture}[
  every node/.style = {
    draw, circle, 
    inner sep = 2pt
  },
  thick arrow/.style = {
    ->, -stealth, 
    ultra thick
  },
  thin arrow/.style = {
    ->, -stealth, thin
  },
  ]
\begin{scope}
  % node L<i> <j> means <j>-th node of the <i>-th layer 
  \node[] (L0) {};

  \node[right = of L0, label = {[above]\Huge{$z$}}] (L1){};

  \node[below right = of L1] (L2 0){};
  \node[above right = of L1] (L2 1){};

  \node[above right = of L2 0, label = {[below = 5pt]\Huge{$x$}}] (L3){};
  
  \node[right = of L3] (L4){};

 % edges
  \draw[thick arrow] (L0) -- (L1);

  \draw[thin arrow] (L1) -- (L2 0);
  \draw[thick arrow] (L1) -- (L2 1);

  \draw[thick arrow] (L2 0) -- (L3);
  \draw[thin arrow] (L2 1) -- (L3);

  \draw[thin arrow] (L3) -- (L4);
\end{scope}
  \begin{scope}[xshift=9cm]
  % node L<i> <j> means <j>-th node of the <i>-th layer 
  \node[label = {[left]\Huge{$z$}}] (L0) {};

  \node[below right = of L0] (L1 0){};
  \node[above right = of L0] (L1 1){};

  \node[right = of L1 0] (L2 0){};
  \node[right = of L1 1] (L2 1){};
  
  \node[right = of L2 0] (L3 0){};
  \node[right = of L2 1] (L3 1){};

  \node[above right = of L3 0, label = {[right]\Huge{$x$}}] (L4){};
  
 % edges
  \draw[thick arrow] (L0) -- (L1 0);
  \draw[thin arrow] (L0) -- (L1 1);

  \draw[thin arrow] (L1 0) -- (L2 0);
  \draw[thick arrow] (L1 1) -- (L2 1);
  
  \draw[thin arrow] (L2 0) -- (L3 0);
  \draw[thick arrow] (L2 1) -- (L3 1);

  \draw[thick arrow] (L3 0) -- (L4);
  \draw[thin arrow] (L3 1) -- (L4);
\end{scope}
\end{tikzpicture}}
\end{center}

\subsection{The main result}\label{maintheorems}
%While knowledge on regular crystals for simply-laced GCMs gives that for finite and affine GCMs %in principle
%via diagram folding techniques such as ~\cite[\S5]{BS},~\cite{NS},~\cite{Kas1},
Though one can use diagram folding techniques (see ~\cite[\S5]{BS},~\cite{NS},~\cite{Kas1}),
It is still desirable to have local axioms of regular crystals. % for wider class in symmetrizable GCMs than simply-laced GCMs.
Thanks to ~\cite[Proposition 2.4.4]{KKMMNN2}, this question is reduced to the rank 2 cases.
The aim of this paper is to give an answer for $B_2=\begin{pmatrix} 2 & -2 \\ -1 & 2 \end{pmatrix}$ 
(see \S\ref{prevstu}, on previous studies).

\begin{Thm}\label{maintheorem}
Let $A=(a_{ij})_{i,j\in I}$ be a symmetrizable GCM with $\forall i\ne\forall j\in I, A|_{i,j}=A_1\oplus A_1,A_2,B_2,\TRANS{B_2}$
and let $X$ be an $A$-regular graph with a maximum element $x_0\in X$ that further satisfies 
\begin{align*}
\forall i\ne\forall j\in I,A|_{i,j}=B_2\Rightarrow \textup{(S6),(S7),(S8),(S9)}.
\end{align*}
Then, there exists a unique $I$-colored directed graph isomorphism between $X$ and $B(\lambda)$, where $\lambda\in P^+$ satisfies $\langle h_i,\lambda\rangle=\varphi_i(x_0)$ for all $i\in I$. 
%Here $\Delta(x)=(\Delta^e_{\varepsilon}(i,j,x),\Delta^e_{\varepsilon}(j,i,x))$, $\Delta'(x)=(\Delta^f_{\varphi}(i,j,x),\Delta^f_{\varphi}(j,i,x))$.
%Below $\Delta(x)=(\Delta^e_{\varepsilon}(i,j,x),\Delta^e_{\varepsilon}(j,i,x))$, $\Delta'(x)=(\Delta^f_{\varphi}(i,j,x),\Delta^f_{\varphi}(j,i,x))$.
\end{Thm}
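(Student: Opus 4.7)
The plan is to adapt Stembridge's proof of Theorem \ref{StembridgeCrystal2} to the present non-simply-laced setting. The candidate isomorphism $\Phi\colon B(\lambda)\to X$ is defined inductively by $\Phi(b_{\lambda})=x_0$ and $\Phi(\KF{i}b)=\KF{i}\Phi(b)$; one must verify that $\Phi$ is well-defined (independent of the sequence of $\KF{i}$'s chosen to reach $b$), that it is a graph homomorphism, and that it is bijective. Well-definedness is the crux: injectivity and surjectivity then follow from (M2) applied on both sides. Uniqueness of the isomorphism is automatic because any isomorphism must send $x_0$ to $b_{\lambda}$ (the unique element with all $\varepsilon_i=0$) and then commute with the $\KF{i}$.

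The first step is to verify that $B(\lambda)$ itself satisfies (S1)--(S9); axioms (S1)--(S5) come from Theorem \ref{StembridgeCrystal}, while (S6)--(S9) are to be checked in an explicit model of $B_2$-crystals, for instance Kashiwara--Nakashima tableaux or the Lusztig parameterization mentioned in the keywords. The well-definedness of $\Phi$ then reduces, via ~\cite[Proposition 2.4.4]{KKMMNN2}, to a rank 2 statement: for every pair $\{i,j\}\subseteq I$, the $\{i,j\}$-sub-crystal of $X$ must agree with that of $B(\lambda)$. For pairs with $A|_{i,j}=A_1\oplus A_1$ or $A_2$ this is exactly Theorem \ref{StembridgeCrystal2}; for $A|_{i,j}=B_2$ or $\TRANS{B_2}$ (the latter being $B_2$ with the two indices swapped, so that (S6)--(S9) apply with the orientation for which $a_{ij}=-2$), this is the new rank 2 $B_2$ characterization that forms the heart of the paper.

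The main obstacle is therefore the rank 2 $B_2$ case: any graph with $|I|=2$, $A=B_2$, satisfying (S1)--(S9) and carrying a maximum element $x_0$, is isomorphic to $B(\varphi_1(x_0)\Lambda_1+\varphi_2(x_0)\Lambda_2)$. Here the Weyl group is the dihedral group of order $8$ and the longest braid relation has length $4$, so local configurations around a vertex are substantially richer than in the simply-laced setting. The role of (S6)--(S9) should be to encode these length-$4$ commutation relations, much as (B$^-$) and (B$^+$) encode the length-$3$ relations for $A_2$. Concretely, I would argue by induction on the graph distance from $x_0$: at each stage, enumerate the finitely many local patterns permitted by (S1)--(S9) together with the constraints of $\Delta^e_{\varepsilon}, \Delta^e_{\varphi}$ dictated by $a_{ij}=-2, a_{ji}=-1$, and match each to the corresponding patch of $B(\lambda)$. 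The enumeration will be considerably longer than Stembridge's simply-laced argument but should remain finite and tractable via a careful case analysis.
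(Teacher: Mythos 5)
Your proposal follows essentially the same route as the paper: first verify that $B(\lambda)$ satisfies (S6)--(S9) in an explicit rank-2 model (the paper uses the Lusztig/PBW parameterization of \S\ref{rb2} rather than Kashiwara--Nakashima tableaux), then build the isomorphism by induction on depth from the maximum element, resolving well-definedness at each level by a local case analysis on $(\Delta^e_{\varepsilon}(i,j,x),\Delta^e_{\varepsilon}(j,i,x))$ in which (S6)--(S9) supply the length-$4$ confluence relations, exactly as in Proposition \ref{uniqprop}. The only routing difference is that the paper runs the depth induction directly for arbitrary $I$ instead of first invoking \cite[Proposition 2.4.4]{KKMMNN2} on the abstract graph $X$ (which would require first equipping $X$ with a well-defined weight function, the content of Proposition \ref{confprop} on homogeneous local confluence), and the ``careful case analysis'' you defer is of course where the substance of the paper lies.
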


\begin{enumerate}
\item[(S6)] \mbox{$\forall x\in X,\KE{i}x\ne\ZERO\ne\KE{j}x,\Delta(x)=(1,2)\Rightarrow$ (D$^-$)}.
\item[(S7)] \mbox{$\forall x\in X,\KF{i}x\ne\ZERO\ne\KF{j}x,\Delta'(x)=(1,2)\Rightarrow$ (D$^+$)}.
\item[(S8)] \mbox{$\forall x\in X,\KF{i}x\ne\ZERO\ne\KF{j}x,\Delta'(x)=(1,1),\varphi_i(x)\geq 2\Rightarrow$ (C$^+_1$)}.
\item[(S9)] \mbox{$\forall x\in X,\KF{i}x\ne\ZERO\ne\KF{j}x,\Delta'(x)=(0,2),\KF{j}\KF{i}^2x\ne\ZERO,\Delta^f_{\varphi}(j,i,\KF{i}^2x)=0\Rightarrow$ (C$^+_1$)}.%,\exists z=\KF{2}\KF{1}^3\KF{2}z=\KF{2}\KF{1}^2\KF{2}\KF{1}x=\KF{1}\KF{2}^2\KF{1}^2x$}.
\end{enumerate}

\noindent{(D$^-$)} 
\mbox{$y:=\KE{i}^2\KE{j}x$,$\exists y'=\KE{i}^2\KE{j}^2\KE{i}x$,(P$^-_1$),(Q$^-_1$),(R$^-$),$(\Delta^f_{\varphi}(i,j,y),\Delta^f_{\varphi}(i,j,y'))\ne (1,0)$}.

\noindent{(D$^+$)} 
\mbox{$y:=\KF{i}^2\KF{j}x$,$\exists y'=\KF{i}^2\KF{j}^2\KF{i}x$,$(\Delta^e_{\varepsilon}(i,j,y),\Delta^e_{\varepsilon}(i,j,y'))=(0,1)\Rightarrow\exists z=\KF{j}\KF{i}^3\KF{j}^2\KF{i}x=\KF{i}\KF{j}^2\KF{i}^3\KF{j}x$}.%(Q$^-_1$)}

\noindent{(C$^+_1$)} 
\mbox{$\exists z=\KF{i}\KF{j}^2\KF{i}^2x=\KF{j}\KF{i}^3\KF{j}x$}.%,\Delta(w)=(1,2)$}

\noindent{(P$^-_1$)} 
\mbox{$(\Delta^f_{\varphi}(i,j,y),\Delta^f_{\varphi}(i,j,y'))=(1,1)\Rightarrow\KF{j}y'=\KE{i}y,\Delta^f_{\varphi}(j,i,y')=1$}.

%\mbox{$\Rightarrow\exists z=\KE{i}^2\KE{j}^2\KE{i}x=\KE{j}\KE{i}^3\KE{j}x,\Delta'(z)=(1,1),(\Delta^e_{\varepsilon}(i,j,\KF{i}^2\KF{j}z),\Delta^e_{\varepsilon}(i,j,\KF{i}^2\KF{j}^2\KF{i}z))=(1,1)$}

\noindent{(Q$_1^-$)} 
\mbox{$(\Delta^f_{\varphi}(i,j,y),\Delta^f_{\varphi}(i,j,y'))=(0,1)\Rightarrow\exists z=\KE{j}\KE{i}^3\KE{j}^2\KE{i}x=\KE{i}\KE{j}^2\KE{i}^3\KE{j}x,\Delta'(z)=(1,2)$}.
%,(\Delta^e_{\varepsilon}(i,j,\KF{i}^2\KF{j}z),\Delta^e_{\varepsilon}(i,j,\KF{i}^2\KF{j}^2\KF{i}z))=(0,1)$}

\noindent{(R$^-$)} 
\mbox{$(\Delta^f_{\varphi}(i,j,y),\Delta^f_{\varphi}(i,j,y'))=(0,0)\Rightarrow\KF{j}y'=\KE{i}y,\Delta^f_{\varphi}(j,i,y')=2,\Delta^f_{\varphi}(j,i,\KF{i}^2y')=0$}.%,\exists z=\KE{i}\KE{j}^2\KE{i}x=\KE{j}\KE{i}^2\KE{j}x$}

%\noindent{(P$^+_1$)} 
%\mbox{$(\Delta^e_{\varepsilon}(i,j,y),\Delta^e_{\varepsilon}(i,j,y'))=(1,1)\Rightarrow\exists z=\KF{i}^2\KF{j}^2\KF{i}x=\KF{j}\KF{i}^3\KF{j}x$}%,\Delta(z)=(1,1)$}

%\noindent{(Q$_1^+$)} 
%\mbox{$(\Delta^e_{\varepsilon}(i,j,y),\Delta^e_{\varepsilon}(i,j,y'))=(0,1)\Rightarrow\exists z=\KF{j}\KF{i}^3\KF{j}^2\KF{i}x=\KF{i}\KF{j}^2\KF{i}^3\KF{j}x$}%,\Delta(z)=(1,2)$}

%\noindent{(R$^+$)} 
%\mbox{$(\Delta^e_{\varepsilon}(i,j,y),\Delta^e_{\varepsilon}(i,j,y'))=(0,0)\Rightarrow\exists z=\KF{i}\KF{j}^2\KF{i}x=\KF{j}\KF{i}^2\KF{j}x$}%,\Delta(z)=(1,2)$}

\hspace{0mm}

Here, $\Delta(x)=(\Delta^e_{\varepsilon}(i,j,x),\Delta^e_{\varepsilon}(j,i,x))$ and $\Delta'(w)=(\Delta^f_{\varphi}(i,j,w),\Delta^f_{\varphi}(j,i,w))$ for $w=x,z$.
Note that $y$ in (D$^-$) (resp. (D$^+$)) is just defined.
The existence is not a part of the axiom
because it follows from $\Delta^e_{\varepsilon}(j,i,x)=2$ (resp. $\Delta^f_{\varphi}(j,i,x)=2$).
Note also that we have $\KE{i}y\ne\ZERO$ in (P$^-_1$),(R$^-$) by $\Delta^e_{\varepsilon}(j,i,x)=2$ and $\varepsilon_i(x)\geq 1$. 
%From now on, we omit displaying $^{\pm}$-version if $^{\mp}$-version is presented in advance.

\begin{Ex}
We duplicate ~\cite[Figure 5]{Ste} as Figure \ref{crystalgraphs} (see also Example \ref{fundamentalweight}). 
In Figure \ref{crystalgraphs}, thick arrows are 1-arrows.
%Note that there are typos on the convention of $B_2$ in ~\cite{Ste}). % and thin arrows in ~\cite[Figure 5]{Ste} are our 2-arrows.}.
%They are $B_2$ highest weight crystals $B(\Lambda_1+\Lambda_2),B(3\Lambda_1),B(2\Lambda_2)$ from left to right (see also Example \ref{fundamentalweight}) and
We can see an appearance of (Q$_1^-$),(P$^-_1$),(R$^-$) from left to right. 
We can also see (S7) (resp. (S8)) in the left (resp. middle) graph at $z$, and (S9) in the right graph at $y'$.
\end{Ex}

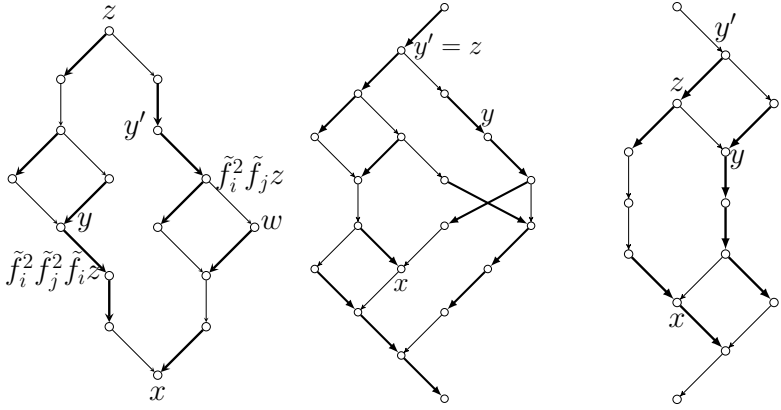
\begin{figure}
\begin{minipage}{.32\textwidth}
{\scalebox{0.56}{\begin{tikzpicture}[
  every node/.style = {
    draw, circle, 
    inner sep = 2pt
  },
  thick arrow/.style = {
    ->, -stealth, 
    ultra thick
  },
  thin arrow/.style = {
    ->, -stealth, thin
  },
  ]

  % node L<i> <j> means <j>-th node of the <i>-th layer 
  \node[, label = {[above]\Huge{$z$}}] (L0) {};

  \node[below left = of L0] (L1 0){};
  \node[below right = of L0] (L1 1){};

  \node[below = of L1 0] (L2 0){};
  \node[below = of L1 1, label = {[left]\Huge{$y'$}}] (L2 1){};

  \node[below left = of L2 0] (L3 0){};
  \node[below right = of L2 0] (L3 1){};
  \node[below right = of L2 1, label = {[right]\Huge{$\KF{i}^2\KF{j}z$}}] (L3 2){};

  \node[below left = of L3 1, label = {[right = 5pt]\Huge{$y$}}] (L4 0){};
  \node[below left = of L3 2] (L4 1){};
  \node[below right = of L3 2, label = {[right]\Huge{$w$}}] (L4 2){};

  \node[below right = of L4 0, label = {[left]\Huge{$\KF{i}^2\KF{j}^2\KF{i}z$}}] (L5 0){};
  \node[below right = of L4 1] (L5 1){};

  \node[below = of L5 0] (L6 0){};
  \node[below = of L5 1] (L6 1){};

  \node[below right = of L6 0, label = {[below = 5pt]\Huge{$x$}}] (L7){};

 % edges
  \draw[thick arrow] (L0) -- (L1 0);
  \draw[thin arrow] (L0) -- (L1 1);

  \draw[thin arrow] (L1 0) -- (L2 0);
  \draw[thick arrow] (L1 1) -- (L2 1);

  \draw[thick arrow] (L2 0) -- (L3 0);
  \draw[thin arrow] (L2 0) -- (L3 1);
  \draw[thick arrow] (L2 1) -- (L3 2);

  \draw[thin arrow] (L3 0) -- (L4 0);
  \draw[thick arrow] (L3 1) -- (L4 0);
  \draw[thick arrow] (L3 2) -- (L4 1);
  \draw[thin arrow] (L3 2) -- (L4 2);

  \draw[thick arrow] (L4 0) -- (L5 0);
  \draw[thin arrow] (L4 1) -- (L5 1);
  \draw[thick arrow] (L4 2) -- (L5 1);

  \draw[thick arrow] (L5 0) -- (L6 0);
  \draw[thin arrow] (L5 1) -- (L6 1);

  \draw[thin arrow] (L6 0) -- (L7);
  \draw[thick arrow] (L6 1) -- (L7);
\end{tikzpicture}}}
\end{minipage}
\begin{minipage}{.32\textwidth}
{\scalebox{0.5}{\begin{tikzpicture}[
  every node/.style = {
    draw, circle, 
    inner sep = 2pt
  },
  thick arrow/.style = {
    ->, -latex, 
    ultra thick
  },
  thin arrow/.style = {
    ->, -stealth', thin
  },
  ]

  % node L<i> <j> means <j>-th node of the <i>-th layer 
  \node (L0) {};

  \node[below left = of L0, label = {[right = 5pt]\Huge{$y'=z$}}] (L1){};

  \node[below left = of L1] (L2 0){};
  \node[below right= of L1] (L2 1){};

  \node[below left = of L2 0] (L3 0){};
  \node[below right = of L2 0] (L3 1){};
  \node[below right = of L2 1, label = {[above]\Huge{$y$}}] (L3 2){};

  \node[below right = of L3 0] (L4 0){};
  \node[below right = of L3 1] (L4 1){};
  \node[below right = of L3 2] (L4 2){};

  \node[below = of L4 0] (L5 0){};
  \node[below = of L4 1] (L5 1){};
  \node[below = of L4 2] (L5 2){};

  \node[below left = of L5 0] (L6 0){};
  \node[below right= of L5 0, label = {[below = 5pt]\Huge{$x$}}] (L6 1){};
  \node[below left = of L5 2] (L6 2){};

  \node[below right= of L6 0] (L7 0){};
  \node[below left = of L6 2] (L7 1){};

  \node[below right= of L7 0] (L8){};

  \node[below right= of L8] (L9){};
  
 % edges
  \draw[thick arrow] (L0) -- (L1);

  \draw[thick arrow] (L1) -- (L2 0);
  \draw[thin arrow] (L1) -- (L2 1);

  \draw[thick arrow] (L2 0) -- (L3 0);
  \draw[thin arrow] (L2 0) -- (L3 1);
  \draw[thick arrow] (L2 1) -- (L3 2);

  \draw[thin arrow] (L3 0) -- (L4 0);
  \draw[thick arrow] (L3 1) -- (L4 0);
  \draw[thin arrow] (L3 1) -- (L4 1);
  \draw[thick arrow] (L3 2) -- (L4 2);

  \draw[thin arrow] (L4 0) -- (L5 0);
  \draw[thin arrow] (L4 2) -- (L5 2);
  \draw[thick arrow] (L4 1) -- (L5 2);
  \draw[thick arrow] (L4 2) -- (L5 1);

  \draw[thin arrow] (L5 0) -- (L6 0);
  \draw[thick arrow] (L5 0) -- (L6 1);
  \draw[thin arrow] (L5 1) -- (L6 1);
  \draw[thick arrow] (L5 2) -- (L6 2);
  
  \draw[thick arrow] (L6 0) -- (L7 0);
  \draw[thin arrow] (L6 1) -- (L7 0);
  \draw[thick arrow] (L6 2) -- (L7 1);

  \draw[thick arrow] (L7 0) -- (L8);
  \draw[thin arrow] (L7 1) -- (L8);
  
  \draw[thick arrow] (L8) -- (L9);
\end{tikzpicture}}}
\end{minipage}
\begin{minipage}{.32\textwidth}
{\scalebox{0.56}{\begin{tikzpicture}[
  every node/.style = {
    draw, circle, 
    inner sep = 2pt
  },
  thick arrow/.style = {
    ->, -latex, 
    ultra thick
  },
  thin arrow/.style = {
    ->, -stealth', thin
  },
  ]

  % node L<i> <j> means <j>-th node of the <i>-th layer 
  \node (L0) {};

  \node[below right = of L0, label = {[above]\Huge{$y'$}}] (L1){};

  \node[below left = of L1, label = {[above]\Huge{$z$}}] (L2 0){};
  \node[below right= of L1] (L2 1){};

  \node[below left = of L2 0] (L3 0){};
  \node[below left = of L2 1, label = {[below right]\Huge{$y$}}] (L3 1){};

  \node[below = of L3 0] (L4 0){};
  \node[below = of L3 1] (L4 1){};

  \node[below = of L4 0] (L5 0){};
  \node[below = of L4 1] (L5 1){};

  \node[below right= of L5 0, label = {[below = 5pt]\Huge{$x$}}] (L6 0){};
  \node[below right= of L5 1] (L6 1){};

  \node[below right= of L6 0] (L7){};

  \node[below left= of L7] (L8){};
  
 % edges
  \draw[thin arrow] (L0) -- (L1);

  \draw[thin arrow] (L1) -- (L2 1);
  \draw[thick arrow] (L1) -- (L2 0);

  \draw[thick arrow] (L2 0) -- (L3 0);
  \draw[thin arrow] (L2 0) -- (L3 1);
  \draw[thick arrow] (L2 1) -- (L3 1);

  \draw[thin arrow] (L3 0) -- (L4 0);
  \draw[thick arrow] (L3 1) -- (L4 1);
  \draw[thin arrow] (L4 0) -- (L5 0);
  \draw[thick arrow] (L4 1) -- (L5 1);
  
  \draw[thick arrow] (L5 0) -- (L6 0);
  \draw[thin arrow] (L5 1) -- (L6 0);
  \draw[thick arrow] (L5 1) -- (L6 1);

  \draw[thick arrow] (L6 0) -- (L7);
  \draw[thin arrow] (L6 1) -- (L7);

  \draw[thin arrow] (L7) -- (L8);
\end{tikzpicture}}}
\end{minipage}
\caption{$B_2$ crystals $B(\Lambda_1+\Lambda_2),B(3\Lambda_1),B(2\Lambda_2)$ from left to right}% (see also Example \ref{fundamentalweight})}
\label{crystalgraphs}
\end{figure}

\subsection{Variants of axioms}
By the symmetry with respect to $\KE{k}$ and $\KF{k}$ (resp. $\varepsilon_k$ and $\varphi_k$) for $k\in\{i,j\}$ and $i\ne j\in I$ with $A|_{i,j}=B_2$, 
(S8') holds (see %\S\ref{kichaku} and 
Proposition \ref{bsym}).
\begin{enumerate}
\item[(S8')] \mbox{$\forall x\in X,\KE{i}x\ne\ZERO\ne\KE{j}x,\Delta(x)=(1,1),\varepsilon_i(x)\geq 2\Rightarrow$ (C$^-_1$).}
\end{enumerate}
Here, (C$^-_1$) is given as $\exists z=\KE{i}\KE{j}^2\KE{i}^2x=\KE{j}\KE{i}^3\KE{j}x$.
Similarly, the full symmetric version of (S6) (instead of (S7)) and the symmetric version of (S9) hold for $i\ne j\in I$ with $A|_{i,j}=B_2$, but we do not need them.

We can replace (P$^-_1$), (Q$^-_1$) with (P$^-$), (Q$^-$) respectively (and independently).

%\vspace{2mm}
\hspace{0mm}

\noindent{(P$^-$)} 
\mbox{$(\Delta^f_{\varphi}(i,j,y),\Delta^f_{\varphi}(i,j,y'))=(1,1)\Rightarrow y'=\KE{i}\KE{j}\KE{i}\KE{j}\KE{i}x=\KE{j}\KE{i}^3\KE{j}x,\Delta^f_{\varphi}(j,i,y')=1$}.

\noindent{(Q$^-$)} 
\mbox{$(\Delta^f_{\varphi}(i,j,y),\Delta^f_{\varphi}(i,j,y'))=(0,1)$}

\quad\quad\mbox{$\Rightarrow\exists z=\KE{j}\KE{i}^2\KE{j}\KE{i}\KE{j}\KE{i}x=\KE{j}\KE{i}^3\KE{j}^2\KE{i}x=\KE{i}\KE{j}^2\KE{i}^3\KE{j}x=\KE{i}\KE{j}\KE{i}\KE{j}\KE{i}^2\KE{j}x,\Delta'(z)=(1,2)$}.

\hspace{0mm}

A reason why the shorter version operates well is that 
a proposition (precisely, Proposition \ref{confprop}) that is used in the proof of Theorem \ref{maintheorem}
just needs weak Church-Rosser property (a.k.a. local confluence property, see ~\cite[\S2.7]{BN}).
\begin{Def}\label{localhomogconf}
Let $X$ be a good $I$-colored directed graph.
We say $X$ has a \emph{homogeneous local confluence} property if (the last equality is one as multisets)%for $x\in X$ and $i\ne j\in I$ with $\KE{i}x\ne\ZERO,\KE{j}x\ne\ZERO$
\begin{align*}
{} &\forall x\in X,\forall i\ne\forall j\in I,\KE{i}x\ne\ZERO\ne\KE{j}x,\\
&\quad\Rightarrow 
\exists s\geq2,\exists (i_1,\cdots,i_s),\exists (i'_1,\cdots,i'_s)\in I^s,\\
&\quad\quad\quad
i_s=i,i'_s=j,\exists z=
\KE{i_1}\cdots\KE{i_s}x=\KE{i'_1}\cdots\KE{i'_s}x,
\{i_k\mid 1\leq k\leq s\}=\{i'_k\mid 1\leq k\leq s\}.
\end{align*}
\end{Def}

We choose (P$^-_1$) (resp. (Q$^-_1$)) by just a reason that
they involve minimum number of $i,j$-strings in the confluence relation in (P$^-$) (resp. (Q$^-$)) because of the maximum powers on Kashiwara operators.
However by similar reasons mentioned above involving Definition \ref{localhomogconf},
(P$^-_1$) (resp. (Q$^-_1$)) has some alternatives other than (P$^-$) (resp. (Q$^-$)). 
We will not discuss such alternatives because (P$^-_1$) (resp. (Q$^-_1$)) seem to be nice.
For example, if the extra implication in Remark \ref{luck} does not hold, 
we need to add $(\Delta^e_{\varepsilon}(i,j,\KF{i}^2\KF{j}z),\Delta^e_{\varepsilon}(i,j,\KF{i}^2\KF{j}^2\KF{i}z))=(0,1)$
to (Q$_1^-$) so that the assumption $(\Delta^e_{\varepsilon}(i,j,y),\Delta^e_{\varepsilon}(i,j,y'))=(0,1)$ in (D$^+$)
is certainly satisfied when we substitute $z$ to $x$ in (S7). Yet luckily, we need not.

\begin{Rem}\label{luck}
In (Q$^-_1$), the symmetry $\KF{i}^2\KF{j}z=\KF{i}y',\KF{i}^2\KF{j}^2\KF{i}z=\KF{i}y$ (see Figure \ref{crystalgraphs})
and Remark \ref{henkanef}
imply $(\Delta^e_{\varepsilon}(i,j,\KF{i}^2\KF{j}z),\Delta^e_{\varepsilon}(i,j,\KF{i}^2\KF{j}^2\KF{i}z))=(0,1)$ because
\begin{align*}
(\Delta^e_{\varepsilon}(i,j,\KF{i}y'),\Delta^e_{\varepsilon}(i,j,\KF{i}y))
&= -(\Delta^f_{\varepsilon}(i,j,y'),\Delta^f_{\varepsilon}(i,j,y)) \\
&= -(a_{ji}+\Delta^f_{\varphi}(i,j,y'),a_{ji}+\Delta^f_{\varphi}(i,j,y))
=(0,1).
\end{align*}
\end{Rem}

\subsection{Comparison with previous studies}\label{prevstu}
Finding a set of local axioms that characterizes $B_2$ regular crystals has been a well-known
open problem since ~\cite{Ste}.
In rest of this subsection, we compare Theorem \ref{maintheorem} with the previous studies.

\subsubsection{Comparison with ~\cite{Ster}}
The purpose of ~\cite{Ster} was to prove that the crystal graph satisfies
the confluence relations in (P$^-$),(Q$^-$) (and (R$^-$) that implies $\KE{i}\KE{j}^2\KE{i}x=\KE{j}\KE{i}^2\KE{j}x$ by (A$^+_{i,j}$) in (S4)) which were 
observed in ~\cite[p.4822]{Ste} and ~\cite[Remark 6.7.(d)]{Ste2} (see ~\cite[Theorem 1]{Ster}).
As he wrote in Introduction of ~\cite{Ster}, it did not provide a local characterization of
the crystal graph.
To determine which confluence relation actually occurs for $x$ with $\Delta(x)=(1,2)$ from the local structure near $x$,
existences of $y$ and $y'$ in (D$^-$) are crucial. 
%It is  a missing ingredient in pursuing local axiom of $B_2$ regular crystals. % in ~\cite{Ster}.

\begin{Rem}\label{localaxiom}
In this paper, ``local condition'' for $x\in X$ is an axiom 
that involves only $\Delta^g_{\beta}(k,\ell,y),\beta_k(y)$ and $=$ between $y$'s, where $k,\ell\in I,g\in\{e,f\},\beta\in\{\varepsilon,\varphi\}$ and $y$ is ``near'' $x$.
It means that we can go back and forth between $x$ and $y$ at most $N$ arrows, where $N$ is a constant.
In Stembridge's axiom $N=4$ and in ours $N=7$.
Note that the assumption of the existence of a (unique) maximum element
in Theorem \ref{StembridgeCrystal} and Theorem \ref{maintheorem} is not a local condition.
\end{Rem}

Other missing axioms (first singled out in this paper) play the following role.
\begin{itemize}
\item[(S8)] compensates the symmetry breaking in (P$^-_1$) 
in that $\Delta'(z)=(1,1)$ instead of $\Delta'(z)=(1,2)$, where \mbox{$z=\KE{i}^2\KE{j}^2\KE{i}x=\KE{j}\KE{i}^3\KE{j}x(=y')$},
\item[(S9)] handles the fact $\KF{i}^2\KF{j}^2\KF{i}z$ is ``under'' or ``below'' $x$ in (R$^-$), where $z=\KE{i}\KE{j}^2\KE{i}x=\KE{j}\KE{i}^2\KE{j}x$
notwithstanding $\Delta'(z)=(1,2)$ (using the notation that will be introduced in \S\ref{uniqconf}, what we mean is $\DIST(\KF{i}^2\KF{j}^2\KF{i}z)>\DIST(x)$).
\end{itemize}

\begin{Rem}\label{iterative}
As ~\cite[Remark 1.5]{Ste}, Theorem \ref{StembridgeCrystal2} gives an iterative algorithm
that draws simply-laced highest weight crystals (the proof of ~\cite[Proposition 1.4]{Ste} provides an algorithm). 
Especially thanks to (S9),
this straightforwardly translates to Theorem \ref{maintheorem} (the proof of Proposition \ref{uniqprop} provides an algorithm). 
\end{Rem}

\subsubsection{Comparison with ~\cite{DKK}}
In ~\cite{DKK}, they gave a set of axioms (B0),(B1),(B2),(B3),(B3'),(B5)--(B13),(B5')--(B13') and claimed 
that it characterizes $B_2$ regular crystals (see the first paragraph of ~\cite[\S3]{DKK}). % (the author could not check their proof because ~\cite{DKK} is beyond the author's comprehension).
The idea is different from ~\cite{Ste} while this paper is a small modification of ~\cite{Ste} as in Remarks \ref{localaxiom} and \ref{iterative}.
For example, it is not clear how the axioms of ~\cite{DKK} are translated to an iterative algorithm mentioned in Remark \ref{iterative}.
It seems to be interesting to clarify an relationship between the idea of ~\cite{DKK} and ~\cite{Ste} (and ours). %if ~\cite{DKK} is correct (see Remark \ref{dkkerror}).
We emphasize that Theorem \ref{maintheorem} is not an improvement of ~\cite{DKK} in that %for Theorem \ref{maintheorem} 
there is no logical dependencies on ~\cite{DKK} (and there also seems no
obvious implications of our results to ~\cite{DKK}).

The author can say that the axioms of ~\cite{DKK} are \emph{not local} in the sense of Remark \ref{localaxiom}.
For example, the axiom (B1) (see ~\cite[p.272]{DKK}) 
requires to check that every 2-string\footnote{We adapt the convention of $B_2$ to ours.
Our index $i\in\{1,2\}$ of $B_2$ is $3-i$ in ~\cite{DKK}.} 
\emph{has exactly one} central edge or central vertex. 
Though they give a local criterion for checking \emph{an edge} or \emph{a vertex} to be central\footnote{For 
a newcomer on ~\cite{DKK}, we remark that central does not mean to be located at the middle position 
in a 2-string literally.}
in ~\cite[p.273,lines 21--29]{DKK} (which is mathematically wrong as we will point out in Remark \ref{dkkerror}), 
the axiom (B1), which is a constraint for \emph{a string}, is not locally checkable in the sense of Remark \ref{localaxiom} (at least,
one should say that a local checkability of (B1) is not trivial).

\begin{Rem}\label{dkkerror}
%Some parts of ~\cite{DKK} should be corrected.
%For example, 
The criterion on central vertex in ~\cite[p.273,(i),(ii),(iii)]{DKK} is wrong.
To see this, consider a 2-string from $\KF{i}^2\KF{j}z$ to $w$ in the left graph in Figure \ref{crystalgraphs}. 
The edge $(\KF{i}^2\KF{j}z,w)$ is not central by ~\cite[p.273,line 26]{DKK}.
Both the vertices $\KF{i}^2\KF{j}z$ and $w$ are not central because they satisfy neither of ~\cite[p.273,(i),(ii),(iii)]{DKK}.
Then, this 2-string does not satisfy (B1) contradicting the claim that  
the axioms of ~\cite{DKK} characterize $B_2$ regular crystals.
%We can argue other points that indicate ~\cite{DKK} contains gaps and mathematically wrong claims,
%but we will not go into details because this paper is not a survey paper of ~\cite{DKK}.
\end{Rem}

\begin{Rem}\label{dkkerror2}
~\cite[Corollary 2]{DKK} and ~\cite[Corollary 3]{DKK} that are cited in the proof of 
~\cite[Theorem 4]{DKK} are wrong. In ~\cite[\S3]{DKK}, they proved both
corollaries assuming only axioms (B0)--(B3).
Though we omit a step-by-step verification as in Remark \ref{dkkerror},
it is checked that 
\begin{enumerate}
\item the figure below, where we adapt the notation in ~\cite{DKK}\footnote{As 
in the first paragraph of ~\cite[\S2]{DKK},
vertical arrows are 2-edges and horizontal arrows are 1-edges.
A decoration by $\blacklozenge$ indicates that the decorated 1-edge is central.
A central vertex is depicted by $\bullet$.
Thus, in the figure, 
1-edges $(u,v),(w,x)$ are central edges and vertices $u',y$ are central vertices.}, satisfies (B0)--(B3).
\item ~\cite[Corollary 2]{DKK} does not hold for this figure in that $x$ is not central,
\item ~\cite[Corollary 3]{DKK} does not hold for this figure in that $x$ is not central.
\end{enumerate}
\end{Rem}

\begin{center}
\scalebox{0.6}{
\begin{tikzpicture}[
  usual arrow/.style = {
    ->, thick
  },
  ]
  \begin{scope}[xshift=9cm]
  % node L<i> <j> means <j>-th node of the <i>-th layer 

  \node[draw, circle, inner sep = 2pt,fill=black, label = {[left]\huge{$u'$}}] (L0) {};

  \node[draw, circle, inner sep = 2pt,above = of L0, label={[left]\huge{$u$}}] (L1){};
  \node[draw, circle, inner sep = 2pt,right = of L1, label={[right]\huge{$v$}}] (L2){};

  \node[draw, circle, inner sep = 2pt,above = of L1, label={[left]\huge{$w$}}] (L3){};
  \node[draw, circle, inner sep = 2pt,above = of L2, label={[right]\huge{$x$}}] (L4){};
  
  \node[draw, circle, inner sep = 2pt,fill=black, above = of L4, label={[right]\huge{$y$}}] (L5){};

  \draw[usual arrow] (L0) -- (L1);
  %\draw[thick arrow] (L1) to node {$diamondsuit$} (L2);
  %\draw[thick arrow] (L1) -- (L2) node[pos=0.3} A;
  \draw[usual arrow] (L1) to node {$\blacklozenge$} (L2);
  \draw[usual arrow] (L1) -- (L3);
  \draw[usual arrow] (L2) -- (L4);
  \draw[usual arrow] (L4) -- (L5);
  \draw[usual arrow] (L3) to node {$\blacklozenge$} (L4);
%\draw (L3) to node {$\diamondsuit$} (L4);
\end{scope}
\end{tikzpicture}}
\end{center}

\subsection{Possible applications and future directions}\label{nextsection}
Our motivation for Theorem \ref{maintheorem} comes from a joint work with Masaki Watanabe
on a generalization of Schur partition theorem~\cite[Theorem 1.2]{TW}.
In ~\cite[Theorem 1.6]{TW}, we give a human readable proof for ~\cite[Conjecture 2]{An4} originally proved by computer in ~\cite{ABO}.
The strategy is to prove that a certain subset $S_5$ of the set of all partitions (singled out by Andrews when $p=5$,
and then generalized for any odd $p\geq 3$ in ~\cite[Definition 1.1]{TW})
is an $A^{(2)}_{4}$ regular crystal.
Though there is a concrete, combinatorial definition of Kashiwara operators $\KE{i},\KF{i}$ on $S_5$,
establishing the regularity can be achieved only through the theory of perfect crystals~\cite{KKMMNN,KKMMNN2}.
This method of proof 
is indirect and shares a spirit of categorification that can be called 
``an interpretation of a rich mathematical structure''
like the original proof of positivity of Kazhdan-Lusztig polynomials.

Theorem \ref{maintheorem} may give a direct way to establish the regularity 
and is expected to give applications for the theory of partitions.
The author hopes Theorem \ref{maintheorem} gives applications
in the study of Kirillov-Reshetikhin crystals (see ~\cite{FOS} and the references therein)
and in new directions of research such as ~\cite{Lyn}. 

Subsequent to Theorem \ref{maintheorem}, it is natural to pursue local axioms of $G_2$ regular crystals.
It seems possible once a correct axiom is found (quite likely through computer experimentation).
However it seems to be not a short list (see the last paragraph of ~\cite{Ster}) and 
it is not unreasonable to foresee that we invoke computer verifications in a proof.
An advantage of Theorem \ref{maintheorem} is that
the axioms are not so complicated compared with Stembridge $A$-regularity axioms.
The author also believes that Theorem \ref{maintheorem} is often enough in practice since it is applicable 
to finite GCMs except $G_2$ and 
affine GCMs except $A^{(1)}_{1},G^{(1)}_{2},A^{(2)}_{2},D^{(3)}_4$.

Finally, the author has an impression that there exists no \emph{finite} complete list of local confluence relations (see Definition \ref{localhomogconf}) for $A^{(1)}_{1},A^{(2)}_{2}$  regular crystals.
Even if it is the case, at least logically, there is a possibility that we can``write down \emph{uniformly}'' 
(like Jordan normal forms in the classification of indecomposable $\mathbb{C}[x]$-modules)
infinitely many local confluence relations. 
Finding that or proving an impossibility of that under a reasonable formulation (like wild representation type in the classification of indecomposable $\mathbb{C}\langle x,y\rangle$-modules) might 
give an insight for regular crystals of affine types.

We hope that this paper will push Stembridge's local approach further.

%\vspace{2mm}

\noindent{\bf Acknowledgments.} The author thanks Hironori Oya, Naoki Fujita and Travis Scrimshaw for helpful discussions.
He is also grateful to Motoki Takigiku for 
teaching me the example in Remark \ref{dkkerror2} and
to anonymous referees for many kind comments 
that led to considerable improvement of the presentation.
He was supported by JSPS Kakenhi Grant 17K14154.

%to bypass representation theory further
%and stimulate applications of Kashiwara crystals.
% to the area seemingly not related with RT at first sight by reseachers not necessarily familiar with RT such as pure combinatorists.

%\vspace{2mm}

%\noindent{\bf Acknowledgments.} The author thanks Hironori Oya, Naoki Fujita and Travis Scrimshaw for helpful discussions.
%He is also grateful to a referee (of a journal) for many kind comments
%that led to considerable improvement of the presentation.
%He was supported by JSPS Kakenhi Grants 17K14154.

\section{Proof of Theorem \ref{maintheorem} : $B(\lambda)$ satisfies the axioms in Theorem \ref{maintheorem}}\label{kakunin}
\subsection{A reduction to $A=B_2$}\label{kichaku}
Recall %Thanks to Theorem \ref{StembridgeCrystal} and
the decomposition \mbox{(see ~\cite[Proposition 4.3]{Kas3})}
\begin{align*}
%B(\lambda)=\bigsqcup_{\substack{b\in B(\lambda) \\ \varepsilon_{i}(b)=\varepsilon_{j}(b)=0}}C(b), \quad
B(\lambda)=\bigsqcup_{b}C(b),\quad%\varepsilon_{i}(b)=\varepsilon_{j}(b)=0}C(b), \quad
C(b):=\{\KF{p_1}\cdots\KF{p_s}b\mid s\geq 0,(p_1,\cdots,p_s)\in\{i,j\}^s\}\setminus\{\ZERO\}
\end{align*}
for $i\ne j\in I$, where $b$ runs through all $b\in B(\lambda)$ with $\varepsilon_{i}(b)=\varepsilon_{j}(b)=0$
and we identify $\KF{i}:B\to B\sqcup\{\ZERO\}$ with $\KF{i}:B\sqcup\{\ZERO\}\to B\sqcup\{\ZERO\}$ by $\KF{i}\ZERO=\ZERO$.
As $A|_{i,j}$-crystals $C(b)\cong B(\lambda_b)$
with $\langle h_i,\lambda_b\rangle=\varphi_i(b),\langle h_j,\lambda_b\rangle=\varphi_j(b)$.

Combined with Theorem \ref{StembridgeCrystal}, to prove that $B(\lambda)$ satisfies the axioms in Theorem \ref{maintheorem},
it is enough to prove that $B_2$ highest weight crystals satisfy (S6),(S7),(S8),(S9) putting $i=1,j=2$.

In the rest of \S\ref{kakunin}, % and in \S\ref{rb2},\S\ref{pkakunin1},\S\ref{pkakunin2}, 
we assume $A=B_2=\begin{pmatrix} 2 & -2 \\ -1 & 2 \end{pmatrix}$ (indexed by $I=\{1,2\}$) 
and prove Proposition \ref{kakunin1}, Proposition \ref{kakunin2}, Proposition \ref{kakunin3} 
in \S\ref{pkakunin1}, \S\ref{pkakunin2}, \S\ref{pkakunin3}
that imply ((S6),(S7)),(S8),(S9) respectively thanks to Proposition \ref{bsym}.
%implies (S7) (resp. (S8)) by Proposition \ref{bsym}. See also Remark \ref{exnote}.
In \S\ref{pkakunin1}, there might be some overlap with calculations by ~\cite{Ster} via Kashiwara-Nakashima tableaux~\cite{KN}.
Our proof is based on Lusztig parameterization~\cite{Lus}.

\begin{Prop}\label{kakunin1}
%Let $A=B_2$ and 
Fix $\lambda\in P^+$ and take $x\in B(\lambda)$.
If $\KE{1}x\ne\ZERO\ne\KE{2}x$ and $(\Delta^e_{\varepsilon}(1,2,x),\Delta^e_{\varepsilon}(2,1,x))=(1,2)$, then 
$\exists y'=\KE{1}^2\KE{2}^2\KE{1}x$ and we have exactly (i.e., exclusively) one of the following 3 cases.
Here $\Delta'=(\Delta^f_{\varphi}(1,2,z),\Delta^f_{\varphi}(2,1,z))$ and $\Delta''=(\Delta^f_{\varphi}(1,2,y),\Delta^f_{\varphi}(1,2,y'))$, $y=\KE{1}^2\KE{2}x$.
\end{Prop}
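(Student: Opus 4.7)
The plan is to carry out a direct, bare-hands verification using the Lusztig parameterization of $B(\infty)$ for $B_2$. First I would reduce to $B(\infty)$ by embedding $B(\lambda) \hookrightarrow B(\infty) \otimes T_\lambda \otimes R_\lambda$ so that every quantity $\varepsilon_i(x)$, $\varphi_i(x)$ appearing in the statement becomes readable from $B(\infty)$-data. Then, fixing a reduced word $\mathbf{i} = (1,2,1,2)$ of the longest element $w_0 \in W(B_2)$, Lusztig's bijection $\Phi_\mathbf{i} : B(\infty) \xrightarrow{\sim} \mathbb{Z}_{\geq 0}^4$, together with the piecewise-linear change-of-coordinate map to the other reduced word $\mathbf{i}' = (2,1,2,1)$, makes every $\varepsilon_i$, $\KE{i}$, and hence every $\Delta$-quantity in the statement, into an explicit piecewise-linear function of the parameters $(c_1,c_2,c_3,c_4)$ (and $(c'_1,c'_2,c'_3,c'_4)$) of $x$.

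\textbf{Main steps.} I would first translate the hypothesis $\Delta(x)=(1,2)$, together with its (S2)-consequences $\Delta^e_{\varphi}(1,2,x)=0$ and $\Delta^e_{\varphi}(2,1,x)=0$, into a system of linear (in)equalities on $(c_1,\dots,c_4)$ and $(c'_1,\dots,c'_4)$. Working in whichever of the two charts makes $\KE{1}$ or $\KE{2}$ act by simply decrementing the leftmost coordinate, I would then (a) verify that each of $\KE{1}$, $\KE{2}^2$, $\KE{1}^2$ can indeed be applied in succession, establishing existence of $y=\KE{1}^2\KE{2}x$ and $y'=\KE{1}^2\KE{2}^2\KE{1}x$; (b) compute the pair $\Delta'' = (\Delta^f_{\varphi}(1,2,y),\Delta^f_{\varphi}(1,2,y'))$ as an explicit function of the parameters on the region cut out by the hypothesis; and (c) read off that its image consists of exactly three pairs, namely $(1,1)$, $(0,1)$, $(0,0)$, each occurring on a nonempty stratum, and that $(1,0)$ never occurs. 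Finally, on each of the three strata I would compute the Lusztig parameters of the candidate element $z$ (resp.\ $z=\KE{1}\KE{2}^2\KE{1}x$ or $\KE{j}\KE{i}^3\KE{j}^2\KE{i}x$, etc.) and the value $\Delta'(z)$ required in each of the three subcases, thus also verifying (P$^-_1$), (Q$^-_1$), (R$^-$) as a byproduct.

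\textbf{Expected main obstacle.} The real difficulty is the branching inherent in the piecewise-linear Lusztig formulas: each successive application of $\KE{i}$ or $\KE{j}$ potentially zeroes out a leftmost coordinate, forcing a change of chart to $\mathbf{i}'$, so a naive case analysis explodes. The core of the work is to observe that the hypothesis $\Delta(x)=(1,2)$ already fixes enough of the relevant $\min$'s in the change-of-chart formula to collapse the analysis into a single normal form, after which everything reduces to finite arithmetic on the four Lusztig coordinates. Once this normal form is chosen judiciously (and once one double-checks that Lusztig parameters lift correctly from $B(\infty)$ to $B(\lambda)$ via the $T_\lambda$-twist), the three-way partition and the non-occurrence of $(1,0)$ should both become visible as three disjoint polyhedral regions covering the hypothesis locus.
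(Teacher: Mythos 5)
Your proposal is correct and follows essentially the same route as the paper: the paper also works in the Lusztig PBW parameterization for both reduced words $s_1s_2s_1s_2$ and $s_2s_1s_2s_1$ simultaneously, uses the piecewise-linear transition map $\LusR$ (restricted to $B(\lambda)$ via the $\varepsilon^{\ast}_i$-cutoff) to turn the hypothesis $\Delta(x)=(1,2)$ into explicit inequalities, and shows the hypothesis locus is a disjoint union of three explicitly parameterized families $X_1\sqcup X_2\sqcup X_3$ on which $\Delta''$ takes the values $(0,1),(1,1),(0,0)$ respectively, with the conclusions of each case checked by direct coordinate computation. The ``normal form'' collapse you anticipate is exactly what the paper's auxiliary lemmas on $\LusR$ under the inequalities $a_3\gtrless a_1$, $x_3\gtrless x_1$ accomplish.
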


\noindent{(case $\Delta''=(1,1)$)} 
\mbox{$y'=\KE{1}\KE{2}\KE{1}\KE{2}\KE{1}x=\KE{2}\KE{1}^3\KE{2}x,\Delta^f_{\varphi}(2,1,y')=1$}.

\noindent{(case $\Delta''=(0,1)$)}
\mbox{$\exists z=\KE{2}\KE{1}^2\KE{2}\KE{1}\KE{2}\KE{1}x=\KE{2}\KE{1}^3\KE{2}^2\KE{1}x=\KE{1}\KE{2}^2\KE{1}^3\KE{2}x=\KE{1}\KE{2}\KE{1}\KE{2}\KE{1}^2\KE{2}x$,$\Delta'=(1,2)$}.

\noindent{(case $\Delta''=(0,0)$)}
\mbox{$\KF{2}y'=\KE{1}y,\Delta^f_{\varphi}(2,1,y')=2,\Delta^f_{\varphi}(2,1,\KF{1}^2y')=0$}.

%\vspace*{2mm}

%\mbox{Here $\Delta'=(\Delta^f_{\varphi}(1,2,z)$ and $\Delta^f_{\varphi}(2,1,z))$, $\Delta''=(\Delta^f_{\varphi}(1,2,y),\Delta^f_{\varphi}(1,2,y'))$ where $y=\KE{1}^2\KE{2}x$.}
%and $\Delta'''=(\Delta^e_{\varepsilon}(1,2,\KF{1}^2\KF{2}z),\Delta^e_{\varepsilon}(1,2,\KF{1}^2\KF{2}^2\KF{1}z))$

\begin{Prop}\label{kakunin2}
%Let $A=B_2$ and 
Fix $\lambda\in P^+$ and take $x\in B(\lambda)$.
If $\KE{1}x\ne\ZERO\ne\KE{2}x$ and $\varepsilon_1(x)\geq 2$,$(\Delta^e_{\varepsilon}(1,2,x),\Delta^e_{\varepsilon}(2,1,x))=(1,1)$, then $\exists z=\KE{1}\KE{2}^2\KE{1}^2x=\KE{1}\KE{2}\KE{1}\KE{2}\KE{1}x=\KE{2}\KE{1}^3\KE{2}x$.
\end{Prop}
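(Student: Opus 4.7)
The plan is to work in the Lusztig parameterization of $B(\lambda)$, following \cite{Lus}. The Weyl group of $B_2$ has longest element $w_0$ of length $4$, with the two reduced expressions $\mathbf{i}_1=(1,2,1,2)$ and $\mathbf{i}_2=(2,1,2,1)$. Every $x\in B(\lambda)$ is thus specified by two string parameters $a(x)=(a_1,a_2,a_3,a_4)\in\Z_{\geq 0}^4$ and $b(x)=(b_1,b_2,b_3,b_4)\in\Z_{\geq 0}^4$ recording the successive maximal powers of $\KE{1},\KE{2}$ (resp.~$\KE{2},\KE{1}$) needed to reach the highest weight element, and the two are intertwined by a known piecewise-linear transition map $\LusR\colon\Z_{\geq 0}^4\to\Z_{\geq 0}^4$.

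First I would translate the hypotheses into constraints on $a(x)$. By definition $a_1=\varepsilon_1(x)\geq 2$, and (S2) together with $(\Delta^e_{\varepsilon}(1,2,x),\Delta^e_{\varepsilon}(2,1,x))=(1,1)$ forces $\Delta^e_{\varphi}(1,2,x)=0$ and $\Delta^e_{\varphi}(2,1,x)=-1$, which pin down the first few entries of $a(x)$ relative to $a_1$ and an analogous window of $b(x)$ relative to $b_1$. Next I would compute each of the three composites $\KE{1}\KE{2}^2\KE{1}^2 x$, $\KE{1}\KE{2}\KE{1}\KE{2}\KE{1}x$, and $\KE{2}\KE{1}^3\KE{2}x$ by tracking the parameters stepwise, working in the $\mathbf{i}_1$-parameterization for the first two (where $\KE{1}$ acts directly by decrementing $a_1$) and switching to the $\mathbf{i}_2$-parameterization via $\LusR$ for the last. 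At every stage nonvanishing of the next Kashiwara operator reduces to positivity of the relevant coordinate, and termination at a common element reduces to an equality of Lusztig parameters in one fixed parameterization.

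The main obstacle is the case analysis forced by the piecewise-linearity of $\LusR$ for $B_2$, which is defined by a short list of $\max/\min$ formulas. The region in parameter space cut out by our hypotheses straddles the walls of these pieces, so within each piece the combinatorics of the intermediate $\varepsilon_i$-values along the three composites changes, and each scenario must be verified separately. Once the three answers are brought into a common parameterization on every piece, the equality becomes a routine arithmetic check, completing the proof.
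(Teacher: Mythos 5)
Your proposal follows essentially the same route as the paper: the paper realizes $B(\lambda)$ via Lusztig's PBW parameterization for the two reduced words $s_1s_2s_1s_2$ and $s_2s_1s_2s_1$ simultaneously, uses the piecewise-linear transition map $\LusR$ (Lemmas \ref{lessthanorequalto1}--\ref{lessthanorequalto2} and their corollaries) to show that the hypotheses force $m$ to lie in the explicit family $((a,b,a+1,c),(b+1,a,c,a+2b-2c+1))$ with $a\geq 2$, $0\leq c\leq b$, and then checks the three composites agree by direct computation on each piece. The one small inaccuracy is that you describe the coordinates as string parameters (successive maximal powers of Kashiwara operators), whereas the paper uses the PBW parameterization in which only $\varepsilon_1=a_1$ and $\varepsilon_2=x_1$ are read off directly; this does not change the strategy or its feasibility.
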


\begin{Prop}\label{kakunin3}
%Let $A=B_2$ and 
Fix $\lambda\in P^+$ and take $x\in B(\lambda)$.
If $\KE{1}x\ne\ZERO\ne\KE{2}x$ and $(\Delta^e_{\varepsilon}(1,2,x),\Delta^e_{\varepsilon}(2,1,x))=(0,2)$,
$\KE{2}\KE{1}^2x\ne\ZERO,\Delta^e_{\varepsilon}(2,1,\KE{1}^2x)=0$, then $\exists z=\KE{2}\KE{1}^3\KE{2}z=\KE{2}\KE{1}^2\KE{2}\KE{1}x=\KE{1}\KE{2}^2\KE{1}^2x$.
\end{Prop}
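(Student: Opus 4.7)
The plan is to use the Lusztig parameterization of $B(\infty)$ for $B_2$ with respect to the two reduced expressions $\mathbf{i}=(1,2,1,2)$ and $\mathbf{i}'=(2,1,2,1)$ of the longest element $w_0\in W(B_2)$, together with the Kashiwara embedding $B(\lambda)\hookrightarrow B(\infty)\otimes T_\lambda$. In the $\mathbf{i}$-parameterization, $\KE{1}$ acts (when defined) by decrementing the first coordinate, and similarly $\KE{2}$ in the $\mathbf{i}'$-parameterization; the two parameterizations are related by an explicit piecewise-linear bijection on $\mathbb{Z}_{\geq 0}^4$, and the $\lambda$-constraints bound the admissible values from above. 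This is the same setup that \S\ref{pkakunin1}, \S\ref{pkakunin2} presumably use to dispatch Propositions \ref{kakunin1} and \ref{kakunin2}, so the calculations here should proceed in parallel.

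First I would translate the hypotheses $(\Delta^e_{\varepsilon}(1,2,x),\Delta^e_{\varepsilon}(2,1,x))=(0,2)$, $\KE{2}\KE{1}^2 x\ne\ZERO$, and $\Delta^e_{\varepsilon}(2,1,\KE{1}^2 x)=0$ into explicit inequalities on the Lusztig coordinates $(a_1,a_2,a_3,a_4)$ of $x$ (and the corresponding $(a_1',a_2',a_3',a_4')$ via the transition map). These constraints fix which branch of the piecewise-linear transition formulas governs each intermediate step. Next I would compute each of the three walks
\begin{align*}
\KE{1}\KE{2}^2\KE{1}^2 x,\qquad \KE{2}\KE{1}^2\KE{2}\KE{1} x,\qquad \KE{2}\KE{1}^3\KE{2} x
\end{align*}
by alternating between the two parameterizations: at each Kashiwara step, switch to the parameterization that diagonalizes the operator, decrement, then transition back. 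At each stage one verifies that the relevant coordinate is strictly positive (so the operator is applicable and the result is nonzero) and that the $\lambda$-bound is not violated. Finally, compare the final Lusztig coordinates along all three paths to conclude they land on the common element $z$.

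The main obstacle I expect is combinatorial bookkeeping: the $B_2$ transition map branches on several min/max comparisons, and the hypotheses only directly constrain a handful of coordinates, so one must derive enough auxiliary inequalities to identify a single consistent branch across all seven Kashiwara steps in each path. A secondary subtlety is the role of the auxiliary condition $\Delta^e_{\varepsilon}(2,1,\KE{1}^2 x)=0$: it is precisely this input (together with $\Delta^e_{\varepsilon}(1,2,x)=0$) that excludes the weaker scenarios of Proposition \ref{kakunin1} and forces the triple merger, and this should manifest transparently as the equality of final coordinates. Once the calculations of Propositions \ref{kakunin1} and \ref{kakunin2} are in hand, the present proposition should reduce to a restricted case of the same bookkeeping, with the extra hypothesis selecting exactly one of the branches already analyzed.
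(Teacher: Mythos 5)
Your proposal is correct and follows essentially the same route as the paper: \S\ref{pkakunin3} uses exactly the two-sided Lusztig/PBW parameterization of Proposition \ref{regcry}, translates the hypotheses (including the key auxiliary condition $\Delta^e_{\varepsilon}(2,1,\KE{1}^2x)=0$) into branch-selecting inequalities via Lemmas \ref{lessthanorequalto1}--\ref{lessthanorequalto4} to pin down the coordinates of $x$ in closed form, and then verifies the triple confluence by direct computation. The only cosmetic difference is that the paper first identifies the explicit normal form of all admissible $x$ and then writes down $z$ once, rather than tracing each of the three walks separately.
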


%Proposition \ref{kakunin1} (resp. \ref{kakunin2}) is an enhanced version of (S6) (resp. (S9)).
%Note that (S7) (resp. (S8)) follows from Proposition \ref{kakunin1} (resp. \ref{kakunin2}) thanks to Proposition \ref{bsym}.

The following is a version of the Lusztig involution (see ~\cite[Proposition 21.1.2]{Lu2} and ~\cite[\S7.4]{Kas3}).
We remark that it is not trivial in that it does not hold for any $A$ (e.g. when $A=A_2$).
In our case of $A=B_2$, it holds by $-w_0\lambda=\lambda$ for $\lambda\in P^+$.

\begin{Prop}\label{bsym}
%Let $A=B_2$. 
For $\lambda\in P^+$, there is an involution $w:B(\lambda)\ISOM B(\lambda)$ such that
\begin{enumerate}
\item $\forall b\in B(\lambda), \forall i\in I, \varepsilon_i(b)=\varphi_i(w(b))$,
%\item $\forall b\in B(\lambda), \forall i\in I, \KE{i}(b)\ne\ZERO\Leftrightarrow\KF{i}(w(b))\ne\ZERO$ 
\item $\forall b\in B(\lambda), \forall i\in I, \KE{i}b\ne\ZERO\Rightarrow w(\KE{i}b)=\KF{i}(w(b))$.
\end{enumerate}
\end{Prop}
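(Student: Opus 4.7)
The plan is to realize $w$ as the Lusztig-type involution coming from the self-duality $V(\lambda) \cong V(\lambda)^*$, which holds for $B_2$ because the longest element of its Weyl group acts as $-1$ on the weight lattice. Equivalently, I would identify $w$ with a crystal isomorphism $B(\lambda) \ISOM B(\lambda)^\vee$, where $B^\vee$ denotes the dual crystal obtained by swapping the roles of $e$ and $f$.

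First, I would introduce the dual crystal $B^\vee$ of an arbitrary Kashiwara crystal $B$: same underlying set, with $(\KE{i})^\vee := \KF{i}$, $(\KF{i})^\vee := \KE{i}$, $\varepsilon_i^\vee := \varphi_i$, $\varphi_i^\vee := \varepsilon_i$, and $\WTTT^\vee := -\WTTT$. The axioms (K1)--(K5) for $B^\vee$ are immediate from those for $B$.

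The key step is the isomorphism $B(\lambda)^\vee \cong B(\lambda)$. In finite type $B_2$ the crystal $B(\lambda)$ is finite and has a unique lowest weight element $b^\lambda$ with $\WTTT(b^\lambda) = w_0 \lambda = -\lambda$, using the $B_2$-specific fact $w_0 = -1$ on $P$. Regarded in $B(\lambda)^\vee$, this $b^\lambda$ has $\WTTT^\vee(b^\lambda) = \lambda$ and $\varepsilon_i^\vee(b^\lambda) = \varphi_i(b^\lambda) = 0$, so it is a maximum element with $\varphi_i^\vee(b^\lambda) = \langle h_i, \lambda \rangle$. Checking that $B(\lambda)^\vee$ lies in $\OINT$ (for instance by identifying it with the crystal basis of $V(\lambda)^* \cong V(\lambda)$), Kashiwara's uniqueness theorem supplies a crystal isomorphism $w : B(\lambda) \ISOM B(\lambda)^\vee$ sending $b_\lambda$ to $b^\lambda$.

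Unraveling: that $w$ is a crystal isomorphism translates to $w(\KE{i} b) = (\KE{i})^\vee w(b) = \KF{i} w(b)$, which is (2), and $\varepsilon_i^\vee(w(b)) = \varepsilon_i(b)$, i.e.\ $\varphi_i(w(b)) = \varepsilon_i(b)$, which is (1). For the involution property, note $(B^\vee)^\vee = B$ canonically, so $w \circ w$ is a crystal automorphism of $B(\lambda)$ fixing $b_\lambda$; since $B(\lambda)$ is generated by $b_\lambda$ under the $\KF{i}$'s and $w \circ w$ commutes with them, $w \circ w = \mathrm{id}$.

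The main obstacle, and the only non-formal input, is the $B_2$-specific assertion $w_0 = -1$ on $P$ in the third paragraph (equivalently, that $b^\lambda$ has weight $-\lambda$). This is precisely why the proposition fails for $A = A_2$, where $-w_0$ is the non-trivial diagram involution; all remaining verifications reduce to bookkeeping with the dual crystal.
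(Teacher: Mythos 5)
Your argument is correct and follows essentially the same route as the paper: both realize $w$ via the arrow-reversed (dual) crystal $B(\lambda)^{\vee}$, identify it with $B(-w_0\lambda)$ through the lowest-weight module, and invoke the $B_2$-specific fact that $w_0$ acts as $-1$ on $P$ so that $-w_0\lambda=\lambda$. The only difference is that you spell out the involution property and the bookkeeping for $\varepsilon_i^{\vee},\varphi_i^{\vee}$, which the paper leaves implicit.
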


\subsection{A realization of $B_2$ highest weight crystals}\label{rb2}
The choice $\boldsymbol{i}=s_1s_2s_1s_2$ (resp. $\boldsymbol{j}=s_2s_1s_2s_1$) of a reduced expression of $w_0$ gives 
the convex order on the positive roots $\Delta^+=\{\alpha_1,\alpha_2,\alpha_1+\alpha_2,2\alpha_1+\alpha_2\}$ as
$\alpha_1<2\alpha_1+\alpha_2<\alpha_1+\alpha_2<\alpha_2$ (resp. $\alpha_2<\alpha_1+\alpha_2<2\alpha_1+\alpha_2<\alpha_1$).
Lusztig's PBW parameterization (see ~\cite[\S3]{BZ}) associated with $\boldsymbol{k}\in\{\boldsymbol{i},\boldsymbol{j}\}$
gives a realization of $B(\infty)$ on $\mathbb{N}^4$, where $4=\ell(w_0)$.
Though each carrying the same information of the other in principle, we prefer considering both simultaneously:
%In other words, $(B(\infty),\WTTT,\{{\KE{i}}\}_{i\in I},\{{\KF{i}}\}_{i\in I},\{\varepsilon_i\}_{i\in I},\{\varphi_i\}_{i\in I})$ is a realization of the crystal $B(\infty)$. 
\begin{align*}
B(\infty) &= \{
(\boldsymbol{a},\boldsymbol{x})\in\mathbb{N}^4\times\mathbb{N}^{4}\mid \LusR(\boldsymbol{a})=\boldsymbol{x}
\},\\
\WTTT(\boldsymbol{a},\boldsymbol{x}) &= -(x_2+2x_3+x_4)\alpha_1-(x_1+x_2+x_3)\alpha_2, \\
\varepsilon_1(\boldsymbol{a},\boldsymbol{x}) &= a_1,\quad
\varepsilon_2(\boldsymbol{a},\boldsymbol{x}) = x_1,\quad
\varphi_i(\boldsymbol{a},\boldsymbol{x}) = \varepsilon_i(\boldsymbol{a},\boldsymbol{x}) + \langle h_i,\WTTT(\boldsymbol{a},\boldsymbol{x})\rangle, \\
\KE{1}(\boldsymbol{a},\boldsymbol{x}) &=
\begin{cases}
((a_1-1,a_2,a_3,a_4),\LusR(a_1-1,a_2,a_3,a_4)) & (\varepsilon_1(\boldsymbol{a},\boldsymbol{x})>0) \\
\ZERO                                           & (\varepsilon_1(\boldsymbol{a},\boldsymbol{x})=0),
\end{cases}\\
\KE{2}(\boldsymbol{a},\boldsymbol{x}) &=
\begin{cases}
(\LusRR(x_1-1,x_2,x_3,x_4),(x_1-1,x_2,x_3,x_4)) & (\varepsilon_2(\boldsymbol{a},\boldsymbol{x})>0) \\
\ZERO                                           & (\varepsilon_2(\boldsymbol{a},\boldsymbol{x})=0), 
\end{cases}\\
\KF{1}(\boldsymbol{a},\boldsymbol{x}) &= ((a_1+1,a_2,a_3,a_4),\LusR(a_1+1,a_2,a_3,a_4)), \\
\KF{2}(\boldsymbol{a},\boldsymbol{x}) &= (\LusRR(x_1+1,x_2,x_3,x_4),(x_1+1,x_2,x_3,x_4)).
\end{align*}
Here $\boldsymbol{a}=(a_1,a_2,a_3,a_4),\boldsymbol{x}=(x_1,x_2,x_3,x_4)\in\mathbb{N}^4$ and 
note that $\WTTT(\boldsymbol{a},\boldsymbol{x})$ can also be expressed as $\WTTT(\boldsymbol{a},\boldsymbol{x})=-(a_1+2a_2+a_3)\alpha_1-(a_2+a_3+a_4)\alpha_2$.
The function $\LusR$ switches PBW parameterizations (see ~\cite[\S3]{Gra} and ~\cite[\S1.9]{Lu3}).
\begin{Def}\label{lusrr}
Let $\LusR:\mathbb{N}^4\to\mathbb{N}^4,(a,b,c,d)\mapsto(n_1,\mu-n_2,n_2+n_3-\mu,n_4-2n_3+\mu)$ be a bijection 
with $\LusRR:\mathbb{N}^4\to\mathbb{N}^4,(a,b,c,d)\mapsto(p_1,\nu-p_2,2p_2+p_3-2\nu,p_4-p_3+\nu)$.
\begin{align*}
\begin{array}{lcl}
n_1=\max(b,\max(b,d)+c-a) & {} & p_1=\max(b,\max(b,d)+2(c-a)) \\
n_2=\max(a,c)+2b          & {} & p_2=\max(a,c)+b              \\
n_3=\min(c+d,a+\min(b,d)) & {} & p_3=\min(2c+d,2a+\min(b,d))  \\
n_4=\min(a,c)             & {} & p_4=\min(a,c)                \\
\mu=\max(2n_3,n_2+n_4)    & {} & \nu=\max(p_3,p_2+p_4)
\end{array}
\end{align*}
\end{Def}

In summary, $(B(\infty),\WTTT,({\KE{i}})_{i\in I},({\KF{i}})_{i\in I},(\varepsilon_i)_{i\in I},(\varphi_i)_{i\in I})$ is a realization of the crystal $B(\infty)$. 
Thanks to ~\cite[Proposition 8.2]{Kas3}, $B(\lambda)$ is isomorphic to 
\begin{align*}
\{b\otimes t_{\lambda}\mid b\in B(\infty),\forall i\in I,\varepsilon^{\ast}_i(b)\leq\langle h_i,\lambda\rangle\}
\end{align*}
in $B(\infty)\otimes T_{\lambda}$ by the tensor product ~\cite[\S7.3]{Kas3}, where $T_{\lambda}$ is given by $T_{\lambda}=\{t_{\lambda}\},\WTTT(t_{\lambda})=\lambda,\varphi_i(t_{\lambda})=\varepsilon_i(t_{\lambda})=-\infty,\KE{i}t_{\lambda}=\KF{i}t_{\lambda}=\ZERO$ (see ~\cite[Example 7.3]{Kas3}).
Though we do not explain the $\ast$-structure (see ~\cite[\S8.3]{Kas3}), we use the fact $\varepsilon^{\ast}_1(\boldsymbol{a},\boldsymbol{x})=x_4,\varepsilon^{\ast}_2(\boldsymbol{a},\boldsymbol{x})=a_4$ (see ~\cite[Proposition 3.3.(iii)]{BZ}, ~\cite[\S2.11]{Lus}). Therefore:
\begin{Prop}\label{regcry}
For $\lambda\in P^+$, $B(\lambda)$ is realized as $(B(\lambda),\WTTT,({\KE{i}})_{i\in I},({\KF{i}})_{i\in I},(\varepsilon_i)_{i\in I},(\varphi_i)_{i\in I})$. 
\begin{align*}
B(\lambda) &= \{
(\boldsymbol{a},\boldsymbol{x})\in\mathbb{N}^4\times\mathbb{N}^{4}\mid \LusR(\boldsymbol{a})=\boldsymbol{x},x_4\leq\langle h_1,\lambda\rangle,a_4\leq\langle h_2,\lambda\rangle
\},\\
\WTTT(\boldsymbol{a},\boldsymbol{x}) &= \lambda-(x_2+2x_3+x_4)\alpha_1-(x_1+x_2+x_3)\alpha_2, \\
\varepsilon_1(\boldsymbol{a},\boldsymbol{x}) &= a_1,\quad
\varepsilon_2(\boldsymbol{a},\boldsymbol{x}) = x_1,\quad
\varphi_i(\boldsymbol{a},\boldsymbol{x}) = \varepsilon_i(\boldsymbol{a},\boldsymbol{x}) + \langle h_i,\WTTT(\boldsymbol{a},\boldsymbol{x})\rangle, \\
\KE{1}(\boldsymbol{a},\boldsymbol{x}) &=
\begin{cases}
((a_1-1,a_2,a_3,a_4),\LusR(a_1-1,a_2,a_3,a_4)) & (\varepsilon_1(\boldsymbol{a},\boldsymbol{x})>0) \\
\ZERO                                           & (\varepsilon_1(\boldsymbol{a},\boldsymbol{x})=0),
\end{cases}\\
\KE{2}(\boldsymbol{a},\boldsymbol{x}) &=
\begin{cases}
(\LusRR(x_1-1,x_2,x_3,x_4),(x_1-1,x_2,x_3,x_4)) & (\varepsilon_2(\boldsymbol{a},\boldsymbol{x})>0) \\
\ZERO                                           & (\varepsilon_2(\boldsymbol{a},\boldsymbol{x})=0), 
\end{cases}\\
\KF{1}(\boldsymbol{a},\boldsymbol{x}) &=
\begin{cases}
((a_1+1,a_2,a_3,a_4),\LusR(a_1+1,a_2,a_3,a_4)) & (\varphi_1(\boldsymbol{a},\boldsymbol{x})>0) \\
\ZERO                                           & (\varphi_1(\boldsymbol{a},\boldsymbol{x})=0),
\end{cases}\\
\KF{2}(\boldsymbol{a},\boldsymbol{x}) &=
\begin{cases}
(\LusRR(x_1+1,x_2,x_3,x_4),(x_1+1,x_2,x_3,x_4)) & (\varphi_2(\boldsymbol{a},\boldsymbol{x})>0) \\
\ZERO                                           & (\varphi_2(\boldsymbol{a},\boldsymbol{x})=0).
\end{cases}
\end{align*}
\end{Prop}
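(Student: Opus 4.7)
The plan is to assemble the proposition from two standard pieces: the Lusztig PBW realization of $B(\infty)$ on $\{(\boldsymbol{a},\boldsymbol{x})\in\mathbb{N}^4\times\mathbb{N}^4:\LusR(\boldsymbol{a})=\boldsymbol{x}\}$ (already sketched in the paragraphs preceding the proposition), and Kashiwara's embedding $B(\lambda)\hookrightarrow B(\infty)\otimes T_\lambda$ cutting out those $b\otimes t_\lambda$ with $\varepsilon_i^*(b)\leq\langle h_i,\lambda\rangle$. Because $T_\lambda$ satisfies $\varepsilon_i(t_\lambda)=\varphi_i(t_\lambda)=-\infty$, the tensor product rule collapses dramatically: $\KE{i}(b\otimes t_\lambda)=(\KE{i}b)\otimes t_\lambda$ and $\KF{i}(b\otimes t_\lambda)=(\KF{i}b)\otimes t_\lambda$ whenever the image still lies in the cutoff subset, and $\ZERO$ otherwise. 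The weight shifts by $\lambda$, $\varepsilon_i$ is inherited unchanged from $B(\infty)$, and $\varphi_i$ is then fixed by (K1). This immediately reproduces every line of the proposition except the explicit form of the cutoff and the $\KF{i}=\ZERO$ rule.

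Next I would translate the cutoff into coordinates. Using the identifications $\varepsilon_1^*(\boldsymbol{a},\boldsymbol{x})=x_4$ and $\varepsilon_2^*(\boldsymbol{a},\boldsymbol{x})=a_4$ cited just before the proposition, Kashiwara's condition in its raw form reads $x_4\leq\langle h_1,\lambda\rangle$ and $a_4\leq\langle h_2,\lambda\rangle$. However, $B(\lambda)$ must be closed under the whole crystal action (up to $\ZERO$), so I would rewrite this as the condition that these inequalities persist throughout the entire connected component of $(\boldsymbol{a},\boldsymbol{x})$. Tracing how $\KF{1}$ (which increments $a_1$) moves the image under $\LusR$ via the piecewise-linear formulas of Definition \ref{lusrr} -- in particular $n_4=\min(a_1,a_3)$ -- one sees that $x_4$ grows monotonically along an $\KF{1}$-string and saturates at a value governed by $x_3$; the symmetric analysis along $\KF{2}$-strings using $\LusRR$ yields saturation governed by $a_3$. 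Hence the invariant form of Kashiwara's cutoff is exactly $x_3\leq\langle h_1,\lambda\rangle$ and $a_3\leq\langle h_2,\lambda\rangle$.

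The $\KF{i}=\ZERO$ rule then emerges automatically: it triggers precisely at the end of an $\KF{i}$-string in $B(\lambda)$, and by the identity $\varphi_i(b\otimes t_\lambda)=\varepsilon_i(b)+\langle h_i,\WTTT(b)+\lambda\rangle$ coming from (K1), this is exactly where $\varphi_i$ vanishes. The main obstacle I foresee is the piecewise-linear bookkeeping in the second step: verifying carefully, via Definition \ref{lusrr} and its inverse, that the saturation of $\varepsilon_1^*$ along an $\KF{1}$-string is really $x_3$, and symmetrically that the saturation of $\varepsilon_2^*$ along an $\KF{2}$-string is $a_3$. This amounts to a finite case analysis across the $\min$/$\max$ branches entering the definitions of $n_k,p_k,\mu,\nu$. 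Once this is pinned down, every remaining assertion of the proposition reduces to direct substitution into the tensor-product rule.
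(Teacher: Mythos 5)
Your first paragraph reproduces the paper's entire argument: the proposition is obtained by combining the PBW realization of $B(\infty)$ with Kashiwara's description of $B(\lambda)$ as $\{b\otimes t_{\lambda}\mid \forall i,\ \varepsilon^{\ast}_i(b)\leq\langle h_i,\lambda\rangle\}$ inside $B(\infty)\otimes T_{\lambda}$, together with the quoted coordinate formulas $\varepsilon^{\ast}_1(\boldsymbol{a},\boldsymbol{x})=x_4$, $\varepsilon^{\ast}_2(\boldsymbol{a},\boldsymbol{x})=a_4$. The paper offers nothing beyond these citations, so up to that point you are on the paper's route.

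The genuine gap is your second paragraph. Kashiwara's proposition gives an \emph{exact} description of the image of $B(\lambda)$, not an inclusion that needs to be ``closed up'': the subset cut out by $x_4\leq\langle h_1,\lambda\rangle$ and $a_4\leq\langle h_2,\lambda\rangle$, with $\KF{i}$ declared to be $\ZERO$ whenever it would leave the subset, is already isomorphic to $B(\lambda)$. Requiring the inequalities to ``persist throughout the connected component'' is not a reformulation but a genuinely different (smaller) set, since the image of $B(\lambda)$ is closed under $\KE{i}$ but not under the $\KF{i}$ of $B(\infty)\otimes T_{\lambda}$. Concretely, the two cutoffs are not equivalent: for $\lambda=\Lambda_1$ the vertex $\KF{2}\KF{1}b_{\Lambda_1}$ of the four-element crystal $B(\Lambda_1)$ has Lusztig data $((0,0,1,0),(1,0,0,1))$, so it satisfies $a_4=0\leq 0=\langle h_2,\Lambda_1\rangle$ and $x_4=1\leq 1=\langle h_1,\Lambda_1\rangle$ but violates $a_3\leq\langle h_2,\Lambda_1\rangle$. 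Hence no amount of piecewise-linear ``saturation'' analysis can derive the $x_3,a_3$ inequalities from the $\varepsilon^{\ast}$ condition, and your proposed second step would fail. The discrepancy you are trying to bridge lies in the statement as printed: the raw condition $x_4\leq\langle h_1,\lambda\rangle$, $a_4\leq\langle h_2,\lambda\rangle$ that you wrote down first is the one the paper's own derivation establishes (and is all that the later sections use, since membership in $B(\lambda)$ enters there only abstractly). Your final paragraph is essentially correct, but note that the identification of the boundary of the subset with the locus $\varphi_i=0$ is part of what Kashiwara's cited proposition asserts, rather than something that follows from (K1) alone.
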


\subsection{Auxiliary formulas}
We list formulas that are verified by direct calculation.

%\begin{proof}
%In Definition \ref{lusrr}, necessary data for calculation of $\LusR((a_1,a_2,a_3,a_4)),\LusRR((x_1,x_2,x_3,x_4))$ when $a_3\geq a_1,x_3\geq x_1$ is as follows:
%\begin{align*}
%\begin{array}{lcl}
%n_1=\max(a_2,a_4)+(c-a) & {} & p_1=\max(x_2,x_4)+2(x_3-a_1) \\
%n_2=a_3+2a_2          & {} & p_2=x_3+x_2              \\
%n_3=a_1+\min(a_2,a_4) & {} & p_3=2x_1+\min(x_2,x_4)  \\
%n_4=a_1             & {} & p_4=x_1                \\
%\mu=a_1+a_3+2a_2    & {} & \nu=x_1+x_3+x_2
%\end{array}
%\end{align*}
%\end{proof}

\begin{Lem}\label{lessthanorequalto3}
For $\boldsymbol{a}=(a_1,a_2,a_3,a_4)\in\mathbb{N}^4$ with $a_3\geq a_1$, $\LusR(\boldsymbol{a})$ is given by
\begin{align*}
(\max(a_2,a_4)+a_3-a_1,a_1,\min(a_2,a_4),a_3+2a_2-2\min(a_2,a_4)).
\end{align*}
\end{Lem}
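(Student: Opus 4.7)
The plan is to unwind Definition \ref{lusrr} with input $(a,b,c,d)=(a_1,a_2,a_3,a_4)$ and verify that the hypothesis $a_3\geq a_1$ causes every $\max/\min$ in the definition of the auxiliary quantities $n_1,n_2,n_3,n_4,\mu$ to collapse to a single branch. There is no conceptual content beyond a careful case analysis, so the proof reduces to direct algebraic substitution into the tuple $(n_1,\mu-n_2,n_2+n_3-\mu,n_4-2n_3+\mu)$.

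First I would read off the easy entries: $n_4=\min(a_1,a_3)=a_1$ and $n_2=\max(a_1,a_3)+2a_2=a_3+2a_2$. For $n_1=\max(a_2,\max(a_2,a_4)+a_3-a_1)$, the inequality $\max(a_2,a_4)+a_3-a_1\geq a_2$ follows immediately from $\max(a_2,a_4)\geq a_2$ and $a_3-a_1\geq 0$, so $n_1=\max(a_2,a_4)+a_3-a_1$, which already matches the first coordinate of the claimed expression.

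Next, for $n_3=\min(a_3+a_4,a_1+\min(a_2,a_4))$, the inequality $a_3+a_4\geq a_1+\min(a_2,a_4)$ holds since $a_3\geq a_1$ and $a_4\geq\min(a_2,a_4)$; hence $n_3=a_1+\min(a_2,a_4)$. For $\mu=\max(2n_3,n_2+n_4)$, a short calculation gives $(n_2+n_4)-2n_3=(a_3-a_1)+2(a_2-\min(a_2,a_4))\geq 0$, so $\mu=n_2+n_4=a_1+a_3+2a_2$.

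Substituting these values back, I find $\mu-n_2=a_1$, $n_2+n_3-\mu=\min(a_2,a_4)$, and $n_4-2n_3+\mu=a_3+2a_2-2\min(a_2,a_4)$, matching the formula in the statement. The one place where a little care is needed is the sign check $(n_2+n_4)-2n_3\geq 0$; beyond that, the lemma is pure bookkeeping, which is consistent with it being listed among ``Auxiliary formulas'' rather than as a proposition.
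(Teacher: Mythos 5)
Your proposal is correct and matches the paper's approach: the paper simply declares this lemma ``verified by direct calculation,'' and your substitution into Definition \ref{lusrr}, with the observations that $a_3\geq a_1$ forces $n_1=\max(a_2,a_4)+a_3-a_1$, $n_3=a_1+\min(a_2,a_4)$, and $\mu=n_2+n_4$, is exactly that calculation carried out explicitly. All four coordinates check out, including the key sign verification $(n_2+n_4)-2n_3=(a_3-a_1)+2(a_2-\min(a_2,a_4))\geq 0$.
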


\begin{Cor}\label{greaterthanorequaltosub1}
For $\lambda\in P^+$, take $m=((a_1,a_2,a_3,a_4),(x_1,x_2,x_3,x_4))\in B(\lambda)$.
If $a_3\geq a_1\geq 1$ and $x_1\geq 1$, then $\Delta^e_{\varepsilon}(2,1,m)=\max(0,2+a_1-a_3+2a_2-2\max(a_2,a_4))$.
\end{Cor}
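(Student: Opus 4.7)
The plan is to unwind the definitions in Proposition \ref{regcry}, use Lemma \ref{lessthanorequalto3} to make everything explicit in the $\boldsymbol{a}$-coordinates, and then apply the $\LusRR$-formula from Definition \ref{lusrr} to the unique nontrivial entry.

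First I would set up: by Proposition \ref{regcry} we have $\varepsilon_1(m)=a_1$, so $\Delta^e_{\varepsilon}(2,1,m)=\varepsilon_1(\KE{2}m)-a_1$. The assumptions $x_1\geq 1$ guarantees $\KE{2}m\ne\ZERO$ (and $a_1\geq 1$ will be used later to confirm we are in the right case). Writing $\KE{2}m=(\boldsymbol{a}',\boldsymbol{x}')$ with $\boldsymbol{x}'=(x_1-1,x_2,x_3,x_4)$ and $\boldsymbol{a}'=\LusRR(\boldsymbol{x}')$, we have $\varepsilon_1(\KE{2}m)=a'_1$, which is the entry $p_1$ produced by Definition \ref{lusrr} applied to $(a,b,c,d)=(x_1-1,x_2,x_3,x_4)$, namely
\begin{align*}
p_1=\max\bigl(x_2,\ \max(x_2,x_4)+2(x_3-(x_1-1))\bigr).
\end{align*}

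Next I would translate each $x_k$ back to the $a_k$'s via Lemma \ref{lessthanorequalto3} (applicable since $a_3\geq a_1$): specifically $x_1=\max(a_2,a_4)+a_3-a_1$, $x_2=a_1$, $x_3=\min(a_2,a_4)$, and $x_4=a_3+2a_2-2\min(a_2,a_4)$. The key simplification is $\max(x_2,x_4)=x_4$, which follows from $x_4\geq a_3\geq a_1=x_2$ (using $a_2\geq\min(a_2,a_4)$ and $a_3\geq a_1$). Plugging in also gives $x_3-x_1+1=a_1-a_3+1-(\max(a_2,a_4)-\min(a_2,a_4))$.

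Finally I would carry out the substitution and collect terms: using $\min(a_2,a_4)+\max(a_2,a_4)=a_2+a_4$ to absorb the $2\min$ against the $-2(\max-\min)$, the inner expression telescopes to $\max(x_2,x_4)+2(x_3-x_1+1)=a_1+\bigl(2+a_1-a_3+2a_2-2\max(a_2,a_4)\bigr)$. Hence $p_1=a_1+\max\bigl(0,\,2+a_1-a_3+2a_2-2\max(a_2,a_4)\bigr)$, and subtracting $\varepsilon_1(m)=a_1$ yields the claimed formula. The only real obstacle is keeping the $\min$/$\max$ bookkeeping straight; everything else is a direct substitution from the auxiliary lemmas already established.
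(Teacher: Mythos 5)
Your computation is correct and is exactly the ``direct calculation'' the paper intends: the corollary is placed right after Lemma \ref{lessthanorequalto3} precisely so that one substitutes $x_1=\max(a_2,a_4)+a_3-a_1$, $x_2=a_1$, $x_3=\min(a_2,a_4)$, $x_4=a_3+2a_2-2\min(a_2,a_4)$ into the first entry $p_1=\max(x_2,\max(x_2,x_4)+2(x_3-x_1+1))$ of $\LusRR(x_1-1,x_2,x_3,x_4)$ and subtracts $\varepsilon_1(m)=a_1$. The key observation $\max(x_2,x_4)=x_4$ and the telescoping are handled correctly, so nothing is missing.
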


\begin{Lem}\label{lessthanorequalto4}
For $\boldsymbol{x}=(x_1,x_2,x_3,x_4)\in\mathbb{N}^4$ with $x_3\geq x_1$, $\LusRR(\boldsymbol{x})$ is given by
\begin{align*}
(\max(x_2,x_4)+2(x_3-x_1),x_1,\min(x_2,x_4),x_3+x_2-\min(x_2,x_4)).
\end{align*}
\end{Lem}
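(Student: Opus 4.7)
The plan is straightforward: apply the explicit formula for $\LusRR$ given in Definition \ref{lusrr} to $(a,b,c,d) = (x_1, x_2, x_3, x_4)$ and simplify using the hypothesis $x_3 \geq x_1$. This is the ``$\LusRR$-side'' analogue of Lemma \ref{lessthanorequalto3}, and the same pattern of argument applies.

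First I would dispose of the easy simplifications. Since $c \geq a$, one has $p_4 = \min(x_1,x_3) = x_1$ and $p_2 = \max(x_1,x_3) + x_2 = x_3 + x_2$. For the first component, the inequality $2(x_3 - x_1) \geq 0$ forces $\max(x_2,x_4) + 2(x_3 - x_1) \geq x_2$, so $p_1 = \max(x_2,x_4) + 2(x_3 - x_1)$, matching the claimed first coordinate of $\LusRR(\boldsymbol{x})$.

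The heart of the calculation is to identify $p_3$ and $\nu$. I would check by a short case split on whether $\min(x_2, x_4) = x_2$ or $x_4$ that
\begin{align*}
2x_1 + \min(x_2, x_4) \;\leq\; 2x_3 + x_4,
\end{align*}
using $x_3 \geq x_1$ when $\min = x_4$, and combining $x_3 \geq x_1$ with $x_2 \leq x_4$ when $\min = x_2$; either way $p_3 = 2x_1 + \min(x_2,x_4)$. The same case split then gives $p_3 \leq p_2 + p_4 = x_1 + x_2 + x_3$, so $\nu = p_2 + p_4 = x_1 + x_2 + x_3$. Substituting into the remaining three coordinates yields $\nu - p_2 = x_1$, $2p_2 + p_3 - 2\nu = p_3 - 2x_1 = \min(x_2,x_4)$, and $p_4 - p_3 + \nu = x_3 + x_2 - \min(x_2,x_4)$, which agree with the claimed formula.

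There is no real obstacle — the computation is mechanical once one notices that the hypothesis $x_3 \geq x_1$ is exactly what forces the maximum defining $\nu$ to be attained at $p_2 + p_4$ rather than at $p_3$, and what selects the second branch of the $\min$ defining $p_3$. The only mildly delicate point is the small case analysis on $\min(x_2,x_4)$ used to verify both inequalities; this is the same two-line check that underlies Lemma \ref{lessthanorequalto3}, and I would either carry it out once and reuse it, or bundle the two cases into the single observation $2x_1 + \min(x_2,x_4) \leq (x_1 + x_3) + \min(x_2, x_4) \leq x_3 + x_2 + x_1 \wedge 2x_3 + x_4$.
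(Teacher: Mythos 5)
Your computation is correct and is precisely the ``direct calculation'' the paper invokes for this lemma (the paper records no further detail): under $x_3\geq x_1$ you correctly obtain $p_4=x_1$, $p_2=x_2+x_3$, $p_1=\max(x_2,x_4)+2(x_3-x_1)$, $p_3=2x_1+\min(x_2,x_4)$ and $\nu=p_2+p_4$, and the substitution into $(p_1,\nu-p_2,2p_2+p_3-2\nu,p_4-p_3+\nu)$ gives exactly the claimed quadruple. Nothing is missing.
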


\begin{Cor}\label{greaterthanorequaltosub2}
For $\lambda\in P^+$, take $m=((a_1,a_2,a_3,a_4),(x_1,x_2,x_3,x_4))\in B(\lambda)$.
If $x_3\geq x_1\geq 1$ and $a_1\geq 1$, then $\Delta^e_{\varepsilon}(1,2,m)=\max(0,1+x_1-x_3+x_2-\max(x_2,x_4))$.
\end{Cor}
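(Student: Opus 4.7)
The plan is to reduce the statement to a direct application of the explicit realization of $B(\lambda)$ in Proposition \ref{regcry} together with the formulas for $\LusR$ in Definition \ref{lusrr} and Lemma \ref{lessthanorequalto4}. Since $\varepsilon_1(m)=a_1\geq 1$, Proposition \ref{regcry} gives $\KE{1}m\ne\ZERO$ and
\begin{align*}
\KE{1}m = ((a_1-1,a_2,a_3,a_4),\LusR(a_1-1,a_2,a_3,a_4)),
\end{align*}
so $\varepsilon_2(\KE{1}m)$ is the first component of $\LusR(a_1-1,a_2,a_3,a_4)$, and $\Delta^e_{\varepsilon}(1,2,m)$ equals that first component minus $x_1$.

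The first step is to use the hypothesis $x_3\geq x_1$ to apply Lemma \ref{lessthanorequalto4} to $\boldsymbol{x}=(x_1,x_2,x_3,x_4)$. Since $\LusRR(\boldsymbol{x})=\boldsymbol{a}$ (as $\LusR,\LusRR$ are mutually inverse), this yields the explicit identities
\begin{align*}
a_1=\max(x_2,x_4)+2(x_3-x_1),\quad a_2=x_1,\quad a_3=\min(x_2,x_4),\quad a_4=x_3+x_2-\min(x_2,x_4).
\end{align*}
In particular $a_4\geq x_3\geq x_1=a_2$, so $\max(a_2,a_4)=a_4$.

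The second step is to plug $(a_1-1,a_2,a_3,a_4)$ into the formula $n_1=\max(b,\max(b,d)+c-a)$ from Definition \ref{lusrr}. The first component of $\LusR(a_1-1,a_2,a_3,a_4)$ becomes
\begin{align*}
\max\bigl(a_2,\;a_4+a_3-(a_1-1)\bigr).
\end{align*}
Substituting the expressions for $a_1,a_2,a_3,a_4$ and using $\min(x_2,x_4)+\max(x_2,x_4)=x_2+x_4$, the second argument simplifies to $2x_1+1+x_2-\max(x_2,x_4)-x_3$. Subtracting $x_1=a_2$ then gives
\begin{align*}
\Delta^e_{\varepsilon}(1,2,m)=\max\bigl(0,\;1+x_1-x_3+x_2-\max(x_2,x_4)\bigr),
\end{align*}
as required.

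The argument is essentially bookkeeping, so there is no substantial obstacle; the only point requiring care is verifying $\max(a_2,a_4)=a_4$ (which reduces the general formula of Definition \ref{lusrr} to the convenient form used above) and tracking signs in the simplification. This mirrors exactly the structure of the proof of Corollary \ref{greaterthanorequaltosub1}, with the roles of $\LusR$ and $\LusRR$ (and the indices $1,2$) interchanged.
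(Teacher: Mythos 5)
Your proof is correct and matches the paper's intent: the corollary is stated in the ``Auxiliary formulas'' section as following by direct calculation from Definition \ref{lusrr} and Lemma \ref{lessthanorequalto4}, which is exactly the computation you carry out (identify $\boldsymbol{a}=\LusRR(\boldsymbol{x})$ via Lemma \ref{lessthanorequalto4}, note $\max(a_2,a_4)=a_4$, and evaluate the first component $n_1$ of $\LusR(a_1-1,a_2,a_3,a_4)$ before subtracting $x_1$). No gaps.
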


\begin{Lem}\label{lessthanorequalto1}
For $\boldsymbol{a}=(a_1,a_2,a_3,a_4)\in\mathbb{N}^4$ with $a_3\leq a_1$, $\LusR(\boldsymbol{a})$ is given by
\begin{align*}
\begin{cases}
(a_2,a_3,a_4,a_1+2a_2-2a_4) & (a_2\geq a_4+(a_3-a_1)/2) \\
(a_2,2a_3+2a_4-a_1-2a_2,a_1+2a_2-(a_3+a_4),a_3) & (a_4+a_3-a_1\leq a_2\leq a_4+(a_3-a_1)/2) \\
(a_4+a_3-a_1,a_1,a_2,a_3) & (a_2\leq a_4+a_3-a_1).
\end{cases}
\end{align*}
\end{Lem}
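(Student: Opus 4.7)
The plan is to verify the formula by direct substitution from Definition \ref{lusrr}, breaking into the three indicated cases for $a_2$. Writing $(a,b,c,d) = (a_1, a_2, a_3, a_4)$ and using the hypothesis $a_3 \leq a_1$ (so that $\max(a,c) = a_1$, $\min(a,c) = a_3$, and $c - a \leq 0$), I would first simplify the ingredients that do not depend on the case: $n_2 = a_1 + 2 a_2$, $n_4 = a_3$, and $n_1 = \max(a_2,\, a_3 + a_4 - a_1)$, the last because the inner term $a_2 + (a_3 - a_1)$ is dominated by $a_2$. It then remains only to evaluate $n_3 = \min(a_3 + a_4,\, a_1 + \min(a_2, a_4))$ and $\mu = \max(2 n_3,\, a_1 + 2 a_2 + a_3)$ in each regime, and to substitute into $\LusR(\boldsymbol{a}) = (n_1,\, \mu - n_2,\, n_2 + n_3 - \mu,\, n_4 - 2 n_3 + \mu)$.

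For Case A, where $a_2 \geq a_4 + (a_3 - a_1)/2$ (equivalently $a_1 + 2 a_2 \geq a_3 + 2 a_4$), I would check the sub-cases $a_2 \geq a_4$ and $a_2 < a_4$ separately: each forces $n_3 = a_3 + a_4$, using in the second sub-case that $a_1 + 2 a_2 \geq a_3 + 2 a_4$ together with $a_2 < a_4$ gives $a_1 + a_2 > a_3 + a_4$. This makes $\mu = a_1 + 2 a_2 + a_3$, while the case hypothesis combined with $a_3 \leq a_1$ yields $a_2 \geq a_3 + a_4 - a_1$, so $n_1 = a_2$. Substitution produces the first line. For Case C, where $a_1 + a_2 \leq a_3 + a_4$, the hypothesis $a_3 \leq a_1$ forces $a_2 \leq a_4$; hence $n_3 = a_1 + a_2$, $\mu = 2(a_1 + a_2)$, and $n_1 = a_3 + a_4 - a_1$, producing the third line.

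For Case B, the two bounds combine to give both $a_1 + a_2 \geq a_3 + a_4$ and $a_1 + 2 a_2 \leq a_3 + 2 a_4$, whose difference already yields $a_2 \leq a_4$; consequently $n_3 = a_3 + a_4$, $\mu = 2(a_3 + a_4)$, and $n_1 = a_2$, producing the middle line. The main (mild) obstacle is purely one of bookkeeping: one has to keep straight which of the comparisons $a_2$ versus $a_4$, $a_1 + a_2$ versus $a_3 + a_4$, and $a_1 + 2 a_2$ versus $a_3 + 2 a_4$ are forced by each case hypothesis, since these drive every simplification of the nested $\min$'s and $\max$'s appearing in $n_3$ and $\mu$.
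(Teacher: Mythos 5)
Your proposal is correct and matches the paper's approach: the paper simply states that this lemma (like the others in that subsection) is ``verified by direct calculation'' from Definition \ref{lusrr}, which is exactly the case-by-case substitution you carry out. All the intermediate simplifications you cite (e.g.\ $n_3=a_3+a_4$ and $\mu=a_1+2a_2+a_3$ in the first case, and the deduction $a_2\leq a_4$ in the middle case) check out.
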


\begin{Lem}\label{lessthanorequalto2}
For $\boldsymbol{x}=(x_1,x_2,x_3,x_4)\in\mathbb{N}^4$ with $x_3\leq x_1$, $\LusRR(\boldsymbol{x})$ is \mbox{given by}
\begin{align*}
\begin{cases}
(x_2,x_3,x_4,x_1+x_2-x_4) & (x_2\geq x_4+x_3-x_1) \\
(x_2,2x_3+x_4-x_1-x_2,2x_1+2x_2-2x_3-x_4,x_3) & (x_4+2(x_3-x_1)\leq x_2\leq x_4+x_3-x_1) \\
(x_4+2(x_3-x_1),x_1,x_2,x_3) & (x_2\leq x_4+2(x_3-x_1)).
\end{cases}
\end{align*}
\end{Lem}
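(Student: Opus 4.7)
The plan is to verify the three formulas for $\LusRR(\boldsymbol{x})$ by direct substitution into Definition \ref{lusrr} with $(a,b,c,d) = (x_1, x_2, x_3, x_4)$. The hypothesis $x_3 \leq x_1$ immediately gives $\max(x_1, x_3) = x_1$ and $\min(x_1, x_3) = x_3$, so $p_2 = x_1 + x_2$ and $p_4 = x_3$ hold unconditionally. The two thresholds in the statement satisfy $x_4 + 2(x_3 - x_1) \leq x_4 + x_3 - x_1$ precisely because $x_3 - x_1 \leq 0$, so the three ranges partition the possible values of $x_2$. The crucial identity $x_1 + x_2 + x_3 \geq 2x_3 + x_4$ iff $x_2 \geq x_4 + x_3 - x_1$ will govern which of $p_3$ or $p_2 + p_4$ dominates in $\nu = \max(p_3, p_2 + p_4)$.

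Next I would carry out the three cases. For $x_2 \geq x_4 + x_3 - x_1$, one checks (splitting on whether $x_2 \geq x_4$ if needed) that $p_1 = x_2$, $p_3 = 2x_3 + x_4$, and $\nu = x_1 + x_2 + x_3$; substitution into $(p_1, \nu - p_2, 2p_2 + p_3 - 2\nu, p_4 - p_3 + \nu)$ yields $(x_2, x_3, x_4, x_1 + x_2 - x_4)$. For the middle range $x_4 + 2(x_3 - x_1) \leq x_2 \leq x_4 + x_3 - x_1$, the upper bound forces $x_2 \leq x_4$, so $\min(x_2, x_4) = x_2$ and $\max(x_2, x_4) = x_4$; the lower bound makes $p_1 = x_2$, while $p_3 = 2x_3 + x_4$ (since $2x_1 + x_2 \geq 2x_3 + x_4$) and $\nu = 2x_3 + x_4$ (the opposite dominance from the first case), yielding $(x_2, 2x_3 + x_4 - x_1 - x_2, 2x_1 + 2x_2 - 2x_3 - x_4, x_3)$. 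Finally, for $x_2 \leq x_4 + 2(x_3 - x_1)$ one has $p_1 = x_4 + 2(x_3 - x_1)$, $p_3 = 2x_1 + x_2$, and $\nu = 2x_1 + x_2$ since $x_1 \geq x_3$, so substitution gives $(x_4 + 2(x_3 - x_1), x_1, x_2, x_3)$.

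The main obstacle is pure bookkeeping rather than mathematical difficulty. In each case one must keep track of whether $x_2$ exceeds $x_4$ in order to evaluate $\min(x_2, x_4)$ and $\max(x_2, x_4)$, and verify that the nested $\max/\min$ in $p_1$, $p_3$, and $\nu$ collapse consistently with the case hypothesis. A useful sanity check is boundary consistency: at $x_2 = x_4 + x_3 - x_1$ the first two output formulas both reduce to $(x_4 + x_3 - x_1, x_3, x_4, x_3)$, and at $x_2 = x_4 + 2(x_3 - x_1)$ the last two both reduce to $(x_4 + 2(x_3 - x_1), x_1, x_2, x_3)$, confirming that the piecewise formula is continuous in the input, as it must be since $\LusRR$ is a single function of $(x_1, x_2, x_3, x_4)$. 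This parallels the proof of Lemma \ref{lessthanorequalto1}, which treats $\LusR$ under the analogous hypothesis $a_3 \leq a_1$.
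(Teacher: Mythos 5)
Your case analysis is correct: with $(a,b,c,d)=(x_1,x_2,x_3,x_4)$ and $x_3\leq x_1$ one indeed gets $p_2=x_1+x_2$, $p_4=x_3$ unconditionally, and the evaluations of $p_1$, $p_3$, $\nu$ in each of the three ranges (including the observation that the middle and last ranges force $x_2\leq x_4$) all check out, yielding the three stated formulas. This is exactly the paper's approach — the lemma appears under ``Auxiliary formulas'' with the proof given only as ``verified by direct calculation,'' which is precisely the substitution into Definition~\ref{lusrr} that you carry out.
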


\begin{Cor}\label{greater2}
For $\lambda\in P^+$, take $m=((a_1,a_2,a_3,a_4),(x_1,x_2,x_3,x_4))\in B(\lambda)$.
If $a_1>a_3$ and $x_1>x_3$, then $\Delta^e_{\varepsilon}(1,2,m)\Delta^e_{\varepsilon}(2,1,m)=0$.
\end{Cor}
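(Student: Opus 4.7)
The plan is to compute $\Delta^e_{\varepsilon}(1,2,m)$ and $\Delta^e_{\varepsilon}(2,1,m)$ directly, exploiting the explicit formulas for $\LusR$ and $\LusRR$ provided by Lemmas \ref{lessthanorequalto1} and \ref{lessthanorequalto2}. First I would observe that $a_1 > a_3 \geq 0$ forces $\varepsilon_1(m) = a_1 \geq 1$, and similarly $\varepsilon_2(m) = x_1 \geq 1$, so $\KE{1}m \ne \ZERO \ne \KE{2}m$ and both quantities are defined.

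The crucial observation is that inspecting the three branches of Lemma \ref{lessthanorequalto1} reveals that the first component of $\LusR(a_1, a_2, a_3, a_4)$ equals $\max(a_2, a_3 + a_4 - a_1)$ regardless of which branch applies: the boundary $a_2 = a_4 + a_3 - a_1$ separates cases (A),(B) (first component $= a_2$) from case (C) (first component $= a_3 + a_4 - a_1$). Since the hypothesis $a_3 \leq a_1 - 1$ is preserved under $a_1 \mapsto a_1 - 1$, applying the same identity to $\LusR(a_1 - 1, a_2, a_3, a_4)$ yields
\begin{equation*}
\Delta^e_{\varepsilon}(1,2,m) = \max(a_2, a_3 + a_4 - a_1 + 1) - \max(a_2, a_3 + a_4 - a_1),
\end{equation*}
which is $0$ when $a_1 + a_2 \geq a_3 + a_4 + 1$ and equals $1$ when $a_1 + a_2 \leq a_3 + a_4$.

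If $\Delta^e_{\varepsilon}(1,2,m) = 0$ we are done, so assume $a_1 + a_2 \leq a_3 + a_4$. Then we are in case (C) of Lemma \ref{lessthanorequalto1}, forcing $(x_1, x_2, x_3, x_4) = (a_3 + a_4 - a_1, a_1, a_2, a_3)$, and the strict hypothesis $x_1 > x_3$ sharpens this to $a_4 - a_2 \geq (a_1 - a_3) + 1$. An analogous reading of Lemma \ref{lessthanorequalto2} shows that the first component of $\LusRR(x_1, x_2, x_3, x_4)$ uniformly simplifies to $\max(x_2, x_4 + 2(x_3 - x_1))$, so by the same argument $\Delta^e_{\varepsilon}(2,1,m) = 0$ exactly when $x_2 + 2x_1 - x_4 - 2x_3 \geq 2$. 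Substituting the values above, this difference equals $(a_3 - a_1) + 2(a_4 - a_2)$, which is at least $(a_1 - a_3) + 2 \geq 3$ by the sharpened bound; hence $\Delta^e_{\varepsilon}(2,1,m) = 0$ and the product vanishes.

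The argument is essentially bookkeeping, and the main potential obstacle---a nine-branch case analysis from combining Lemmas \ref{lessthanorequalto1} and \ref{lessthanorequalto2}---is avoided by the uniform $\max$-formula for the first component of each of $\LusR$ and $\LusRR$. This collapses the problem to a single algebraic inequality in the integers $a_1 - a_3 \geq 1$ and $a_4 - a_2 \geq a_1 - a_3 + 1$.
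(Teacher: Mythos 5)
Your proof is correct and follows essentially the same route as the paper: both arguments reduce to the observation that the first component of $\LusR(a_1-1,a_2,a_3,a_4)$ equals $a_2=x_1$ unless one is in the third branch of Lemma \ref{lessthanorequalto1} (so $\Delta^e_{\varepsilon}(1,2,m)=0$ there), and in that remaining branch one substitutes $(x_1,x_2,x_3,x_4)=(a_3+a_4-a_1,a_1,a_2,a_3)$ into Lemma \ref{lessthanorequalto2} and checks the same inequality (your $(a_3-a_1)+2(a_4-a_2)\geq 3$ is exactly the paper's $2+(a_3-a_1)+2(x_3-x_1)<0$ with the sign flipped) to get $\Delta^e_{\varepsilon}(2,1,m)=0$. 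Your uniform $\max$-formula for the first components is a mild repackaging of the paper's branch-by-branch inspection, not a different method.
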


\begin{proof}
By Lemma \ref{lessthanorequalto1}, 
$x_1>x_3$ only happens when $a_2\geq a_4+(a_3-a_1)/2$ or $a_2\leq a_4+a_3-a_1$.
In the former case, $a_2\geq a_4+(a_3-(a_1-1))/2$ also holds because $a_2=x_1>x_3=a_4$ and $a_1>a_3$.
This implies $\Delta^e_{\varepsilon}(1,2,m)=a_2-a_2=0$. In the latter case, we see 
$x_4+2(x_3-(x_1-1))-x_2=2+(a_3-a_1)+2(x_3-x_1)<0$ similarly.
By Lemma \ref{lessthanorequalto2}, $\Delta^e_{\varepsilon}(2,1,m)=a_1-x_2=0$.
\end{proof}

\subsection{Proof of Proposition \ref{kakunin1}}\label{pkakunin1}
First, we show
\begin{align*}
\{m\in B(\lambda) &\mid \varepsilon_1(m),\varepsilon_2(m)>0,
(\Delta^e_{\varepsilon}(1,2,m),\Delta^e_{\varepsilon}(2,1,m))=(1,2)\}=X_1\sqcup X_2\sqcup X_3,
\end{align*}
where $X_1,X_2,X_3$ are defined as follows.
\begin{align*}
X_1 &:= \{((a,b,a,b),(b,a,b,a))\mid a,b\geq 1\}\cap B(\lambda), \\
X_2 &:= \{((a,b,a,c),(b,a,c,a+2b-2c))\mid a\geq 1,0\leq c<b\}\cap B(\lambda), \\
X_3 &:= \{((a,b,c,a+b-c),(b,a,b,c))\mid b\geq 1,0\leq c<a\}\cap B(\lambda).
\end{align*}

Since the inclusion $\supseteq$ is verified by direct calculation, we show $m\in X_1\sqcup X_2\sqcup X_3$ for any $m=((a_1,a_2,a_3,a_4),(x_1,x_2,x_3,x_4))\in B(\lambda)$ such that $a_1,x_1>0,(\Delta^e_{\varepsilon}(1,2,m),\Delta^e_{\varepsilon}(2,1,m))=(1,2)$. By Corollaries \ref{greaterthanorequaltosub1} and \ref{greaterthanorequaltosub2}, we have $a_1\geq a_3,x_1\geq x_3$
and thus we get $a_1=a_3$ or $x_1=x_3$ by Corollary \ref{greater2}.
By Corollaries \ref{greaterthanorequaltosub1} and \ref{greaterthanorequaltosub2}, 
this implies $a_2\geq a_4$ (i.e., $m\in X_1\sqcup X_2$) 
or $x_2\geq x_4$ (i.e., $m\in X_1\sqcup X_3$) respectively.
% and $(\Delta^e_{\varepsilon}(1,2,m),\Delta^e_{\varepsilon}(2,1,m))=(1,2)$. By Corollary \ref{greater2}, $a_1>a_3,x_1>x_3$ contradicts $(\Delta^e_{\varepsilon}(1,2,m),\Delta^e_{\varepsilon}(2,1,m))=(1,2)$.

%conf7
\subsubsection{}
For $x=((a,b,a,b),(b,a,b,a))\in X_1$, we have
$y=((a,b-1,a,b),(b,a,b-1,a))$, 
$y'=((a,b-1,a-1,b),(b-1,a,b-1,a-1))$,
$\KF{1}y=((a+1,b-1,a,b),(b-1,a+1,b-1,a))$, 
$\KF{1}y'=((a+1,b-1,a-1,b),(b-1,a-1,b,a-1))$.
Thus, we get $\Delta''=(0,1)$. Finally, we can check
$z=\KE{2}\KE{1}^2\KE{2}\KE{1}\KE{2}\KE{1}x=\KE{2}\KE{1}^3\KE{2}^2\KE{1}x=\KE{1}\KE{2}^2\KE{1}^3\KE{2}x=\KE{1}\KE{2}\KE{1}\KE{2}\KE{1}^2\KE{2}x=((a-1,b-1,a-1,b-1),(b-1,a-1,b-1,a-1))$ and we have
$\KF{1}z=((a,b-1,a-1,b-1),(b-1,a-1,b-1,a))$, 
$\KF{2}z=((a-1,b-1,a-1,b),(b,a-1,b-1,a-1))$.
Thus, we get $\Delta'=(1,2)$ as desired.

%conf5
\subsubsection{}
For $x=((a,b,a,c),(b,a,c,a+2b-2c))\in X_2$, we have
$y=((a,b-1,a,c),(b-1,a,c,a+2b-2c-2))$, 
$y'=((a,b-1,a-1,c),(b-1,a-1,c,a+2b-2c-2))$, 
$\KF{1}y=((a+1,b-1,a,c),(b-1,a,c,a+2b-2c-1))$, 
$\KF{1}y'=((a+1,b-1,a-1,c),(b-1,a-1,c,a+2b-2c-1))$.
Thus, we get $\Delta''=(1,1)$. 
Finally, we can check $\KE{1}\KE{2}\KE{1}\KE{2}\KE{1}x=\KE{2}\KE{1}^3\KE{2}x=y'$ and 
we have $\KF{2}y'=((a-1,b-1,a,c),(b,a-1,c,a+2b-2c-2))$. 
Thus, we get $\Delta^f_{\varphi}(2,1,y')=1$ as desired.

%conf4
\subsubsection{}
For $x=((a,b,c,a+b-c),(b,a,b,c))\in X_3$, we have
$y=((a,b-1,c,a+b-c),(b,a,b-1,c))$, 
$y'=((a-1,b-1,c,a+b-c-1),(b,a-1,b-1,c))$, 
$\KF{1}y=((a+1,b-1,c,a+b-c),(b-1,a+1,b-1,c))$, 
$\KF{1}y'=((a,b-1,c,a+b-c-1),(b-1,a,b-1,c))$.
Thus, we get $\Delta''=(0,0)$.
Finally, we can check $\KF{2}y'=\KE{1}y=((a-1,b-1,c,a+b-c),(b+1,a-1,b-1,c))$ and
we have $\KF{1}^2y'=((a+1,b-1,c,a+b-c-1),(b-1,a-1,b,c))$,
$\KF{2}\KF{1}^2y'=((a-1,b,c,a+b-c-1),(b,a-1,b,c))$.
Thus, we get $\Delta^f_{\varphi}(2,1,y')=2$, $\Delta^f_{\varphi}(2,1,\KF{1}^2y')=0$ as desired.

\subsection{Proof of Proposition \ref{kakunin2}}\label{pkakunin2}
It is enough to show
\begin{align*}
{} &\{m\in B(\lambda)\mid\varepsilon_1(m)\geq 2,\varepsilon_2(m)>0,
(\Delta^e_{\varepsilon}(1,2,m),\Delta^e_{\varepsilon}(2,1,m))=(1,1)\}\\
&=
\{((a,b,a+1,c),(b+1,a,c,a+2b-2c+1))\mid a\geq 2,0\leq c\leq b\}\cap B(\lambda)
\end{align*}
since we have $z=\KE{1}\KE{2}^2\KE{1}^2m=\KE{1}\KE{2}\KE{1}\KE{2}\KE{1}m=\KE{2}\KE{1}^3\KE{2}m=((a-2,b+1,a-2,c),(b+1,a-2,c,a+2b-2c))$
for $m=((a,b,a+1,c),(b+1,a,c,a+2b-2c+1))$ in the right hand side.

The inclusion $\supseteq$ is verified by direct calculation. To prove the reverse inclusion $\subseteq$, it is enough to show
$a_3\geq a_1$ for any $m=((a_1,a_2,a_3,a_4),(x_1,x_2,x_3,x_4))\in B(\lambda)$ in the left right hand %with $a_1\geq 2,x_1>0,(\Delta^e_{\varepsilon}(1,2,m),\Delta^e_{\varepsilon}(2,1,m))=(1,2)$ 
because Corollary \ref{greaterthanorequaltosub1} implies $a_3=a_1+1,a_2\geq a_4$.
Assume $a_1>a_3$. 
By Corollary \ref{greater2} we have $x_1\leq x_3$.
Then, Corollary \ref{greaterthanorequaltosub2} implies $x_1=x_3$ and $x_2\geq x_4$ that means $m\in X_1\sqcup X_3$ (see \S\ref{pkakunin1}).
This contradicts $\Delta^e_{\varepsilon}(2,1,m)=1$.

\subsection{Proof of Proposition \ref{kakunin3}}\label{pkakunin3}
It is enough to show
\begin{align*}
{} &\{m\in B(\lambda)\mid\varepsilon_1(m)\geq 2,\varepsilon_2(m)>0,\varepsilon_2(\KE{1}^2m)>0,\Delta^e_{\varepsilon}(2,1,\KE{1}^2m)=0,
(\Delta^e_{\varepsilon}(1,2,m),\Delta^e_{\varepsilon}(2,1,m))=(0,2)\}\\
&=
\{((a,b,c,a+b-c-1),(b,a-2,b+1,c))\mid a\geq 2,b\geq1,0\leq c\leq a-2\}\cap B(\lambda)
\end{align*}
since we have $z=\KE{1}\KE{2}^2\KE{1}^2m=\KE{2}\KE{1}^2\KE{2}\KE{1}m=\KE{2}\KE{1}^3\KE{2}m=((a-1,b-1,c,a+b-c-2),(b-1,a-1,b-1,c))$
for $m=((a,b,c,a+b-c-1),(b,a-2,b+1,c))$ in the right hand side. 

The inclusion $\supseteq$ is verified by direct calculation. To prove the reverse inclusion $\subseteq$, it is enough to show
$x_3\geq x_1,x_2\geq x_4$ for any $m=((a_1,a_2,a_3,a_4),(x_1,x_2,x_3,x_4))\in B(\lambda)$ in the left hand side
%with $a_1\geq 2,x_1>0,(\Delta^e_{\varepsilon}(1,2,m),\Delta^e_{\varepsilon}(2,1,m))=(0,2),
%\varepsilon_2(\KE{1}^2m)>0,\Delta^e_{\varepsilon}(2,1,\KE{1}^2x)=0$ 
because the following deduces $x_3=x_1+1$.
\begin{enumerate}
\item $x_3=x_1$, $x_2\geq x_4$ implies $m\in X_1\sqcup X_3$ (see \S\ref{pkakunin1}) and contradicts \mbox{$\Delta^e_{\varepsilon}(1,2,m)=0$.}
\item Let $x_3=x_1+n$ and assume $n\geq 2$ (then, we get a contradiction as (3)--(5)).
\item By Lemma \ref{lessthanorequalto4}, $(a_1,a_2,a_3,a_4)=(x_2+2n,x_1,x_4,x_1+n+x_2-x_4)$.
\item Because $a_2-(a_4+a_3-(a_1-2))=n-2\geq 0$ and $a_4+(a_3-(a_1-2))/2-a_2=1+(x_2-x_4)/2\geq 0$, we have
$\KE{1}^2m=((a_1-2,a_2,a_3,a_4),(x_1,x_2+2,x_1+n-2,x_4))$ by Lemma \ref{lessthanorequalto1}.
\item Because $x_1-1,x_1\leq x_1+n-2$ we see $\Delta^e_{\varepsilon}(2,1,\KE{1}^2m)=2$ by Lemma \ref{lessthanorequalto4}.
\end{enumerate}

In the rest, we show $x_3\geq x_1,x_2\geq x_4$.

First, we show $a_1>a_3$ as follows.
Corollary \ref{greaterthanorequaltosub1} and $\Delta^e_{\varepsilon}(2,1,m)=2$ imply $a_3\leq a_1$.
If $a_1=a_3$, then $a_2\geq a_4$ again by  Corollary \ref{greaterthanorequaltosub1} and $\Delta^e_{\varepsilon}(2,1,m)=2$.
It means $m\in X_1\sqcup X_2$ (see \S\ref{pkakunin1}) and contradicts $\Delta^e_{\varepsilon}(1,2,m)=0$.

Next, we show $x_3\geq x_1$. For this purpose, we assume $x_3<x_1$ (and $a_1>a_3$) to draw contradictions.
By Lemma \ref{lessthanorequalto2}, 
$a_1>a_3$ only happens when $x_2\geq x_4+x_3-x_1$ or $x_2\leq x_4+2(x_3-x_1)$.
In the former case, $x_2\geq x_4+x_3-(x_1-1)$ also holds because $x_2=a_1>a_3=x_4$ (and $x_1>x_3$).
Again, Lemma \ref{lessthanorequalto2} implies $\Delta^e_{\varepsilon}(2,1,m)=x_2-x_2=0$.
In the latter case, 
we may assume $a_1-2>a_3$ because otherwise 
\begin{align*}
\Delta^e_{\varepsilon}(2,1,\KE{1}^2m) 
=
\max(0,2+(a_1-2)-a_3+2a_2-2\max(a_2,a_4))=a_1-a_3>0
\end{align*}
follows from Corollary \ref{greaterthanorequaltosub1} and $a_4=x_3<x_1=a_2$.
Thus, we know $\KE{1}^2m=((a_1-2,a_2,a_3,a_4),(x_1,x_2,x_3,x_4-2))$
by Lemma \ref{lessthanorequalto1} and $a_4+(a_3-(a_1-2))/2-a_2=(x_2-x_4+2)/2\leq 0$.
This implies $\Delta^e_{\varepsilon}(2,1,\KE{1}^2m)=2$ since $x_2\leq (x_4-2)+2(x_3-(x_1-1))$ and Lemma \ref{lessthanorequalto2}.
In both cases, we arrived at contradictions.

Finally, we show $x_2\geq x_4$. For this purpose, we assume $x_2<x_4$ (and $x_3\geq x_1,a_1>a_3$) to draw contradictions. 
Note that in Lemma \ref{lessthanorequalto1} $x_2<x_4$ only occurs when $a_2>a_4+(a_3-a_1)/2$.
In each of the following, we arrived at a contradiction.
\begin{itemize}
\item Assume $a_1-2\geq a_3$. Because $a_2\geq a_4+(a_3-(a_1-2))/2$, again by Lemma \ref{lessthanorequalto1},
we have $\KE{1}^2m=((a_1-2,a_2,a_3,a_4),(x_1,x_2,x_3,x_4-2))$.
Lemma \ref{lessthanorequalto4} and $x_1-1,x_1\leq x_3$ imply $\Delta^e_{\varepsilon}(2,1,\KE{1}^2m)=2$.
\item Assume $a_1-2<a_3$. This only happens when $a_1=a_3+1$. Thanks to Lemma \ref{lessthanorequalto4}, $m$ is of the form
$m=((x_2+1,x_1,x_2,x_1),(x_1,x_2,x_1,x_2+1))$.
By Lemma \ref{lessthanorequalto3}, $\KE{1}^2m=((x_2-1,x_1,x_2,x_1),(x_1+1,x_2-1,x_1,x_2))$.
Thus, we have $\KE{2}\KE{1}^2m=((x_2,x_1,x_2-1,x_1),(x_1,x_2-1,x_1,x_2))$ and $\Delta^e_{\varepsilon}(2,1,\KE{1}^2m)=1$.
\end{itemize}

\section{Proof of Theorem \ref{maintheorem} : Uniqueness}\label{uniqconf}
The following is a version of ~\cite[Proposition 1.2,Remark 1.3.(a)]{Ste}.

\begin{Prop}\label{confprop}
Let $X$ be a good $I$-colored directed graph with a maximum element $x_0\in X$ 
and with homogeneous local confluence property (see Definition \ref{goodgraph},\ref{maxelem},\ref{localhomogconf}).
Then, for $x=\KF{i_1}\cdots\KF{i_s}x_0$, $\WT(x)=\sum_{k=1}^{s}i_k\in\mathbb{N}[I]$ is well-defined.
\end{Prop}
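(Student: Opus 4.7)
The plan is to prove the proposition by strong induction on the \emph{depth} $\DIST(x)$, defined as the minimum length of a $\KF$-sequence from $x_0$ to $x$ (equivalently, the minimum length of an $\KE$-sequence from $x$ to $x_0$, which exists by (M2)). At each depth I would establish simultaneously two statements:
(a) any two $\KE$-sequences from $x$ to $x_0$ carry the same multiset of colors, so $\WT(x)$ is well defined;
(b) if $\KE{i}x \ne \ZERO$, then $\DIST(\KE{i}x) < \DIST(x)$.
The base case $\DIST(x) = 0$ is trivial: $x = x_0$, so (a) is vacuous by (M1) and (b) is vacuous.

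For the inductive step with $\DIST(x) = n \ge 1$, I would first establish (b), since (a) will depend on it. Fix $i$ with $\KE{i}x \ne \ZERO$ and choose a shortest $\KF$-path $x_0 \to x$ whose terminal step is $\KF{j}$; then $\DIST(\KE{j}x) \le n-1$. If $j = i$, we are done. Otherwise apply homogeneous local confluence at $x$ to the pair $\KE{i}x, \KE{j}x$: this yields two $\KE$-sequences of equal length $s \ge 2$ and equal multiset taking $x$ up to a common vertex $z$, beginning respectively with $\KE{i}$ and $\KE{j}$. Iterating the inductive hypothesis (b) along the $j$-sequence (permissible since $\DIST(\KE{j}x) \le n-1 < n$), each successive $\KE$ strictly drops the depth, so $\DIST(z) \le n - s$. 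Reversing the $i$-sequence from $\KE{i}x$ up to $z$ then provides a $\KF$-path $x_0 \to z \to \KE{i}x$ of length at most $(n - s) + (s - 1) = n - 1$, whence $\DIST(\KE{i}x) \le n - 1 < n$.

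With (b) in hand, I would prove (a) by a second appeal to homogeneous local confluence. Take two $\KE$-paths $\sigma_1, \sigma_2$ from $x$ to $x_0$, beginning with colors $i$ and $j$. If $i = j$, both factor through $\KE{i}x$, whose depth is strictly smaller by (b), and the inductive hypothesis (a) applied to the tails finishes the argument. If $i \ne j$, confluence supplies a common vertex $z$ reached from $x$ by two $\KE$-sequences of the same multiset, beginning with $\KE{i}$ and $\KE{j}$; by (b) we have $\DIST(z) < n$. Choose any $\KE$-path $\tau$ from $z$ to $x_0$. Concatenating each confluence-sequence with $\tau$ produces two $\KE$-paths from $x$ to $x_0$ starting with $\KE{i}$ and $\KE{j}$ respectively, whose multisets agree (the two confluence sequences contribute the same multiset, and $\tau$ is common). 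Comparing $\sigma_1$ to the first of these via the ``same starting color'' case already handled (both begin with $\KE{i}$, hence factor through $\KE{i}x$), and likewise $\sigma_2$ to the second, one concludes that $\sigma_1$ and $\sigma_2$ carry the same multiset.

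The main subtlety is arranging the simultaneous induction so that (b) is available at depth $n$ before (a) is attempted there: without (b) one has no control on $\DIST(\KE{i}x)$ for an arbitrary (non-shortest) path, and so cannot legitimately apply IH (a) to tails. Establishing (b) first at each level, using only the weaker hypothesis at smaller depths, is precisely what makes the abelian confluence argument go through with exactly two applications of Definition \ref{localhomogconf}.
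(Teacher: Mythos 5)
Your proof is correct and follows essentially the same route as the paper's: induction on $\DIST(x)$, seeding with the terminal letter of a shortest $\KF$-path from $x_0$ and invoking homogeneous local confluence once at $x$ to transfer depth control from that branch to the other. The paper folds your auxiliary claim (b) into the induction statement itself (well-definedness of $\WT$ at a vertex already forces every path to it to have the same length, hence the depth drop along $\KE$-steps), so it needs only one application of confluence where you use two, but the substance is identical.
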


Here $\mathbb{N}[I]$ is the free commutative monoid generated by $I$.

\begin{proof}
We prove by induction on $d=\DIST(x):=\min\{s\geq 0\mid \exists(i_1,\cdots,i_s)\in I^s,x=\KF{i_1}\cdots\KF{i_s}x_0\}$.
$d=0$ is equivalent to $x=x_0$. Note $\WT(x_0)=0$ since $x_0$ is a maximum element. % and $\WT(x)=0$ because $x_0$ is maximum.
%For $d=1$, we need to prove that we never have $\exists i\ne\exists j\in I,\KF{i}x_0=x=\KF{j}x_0$. This follows from (S3).

Let $d\geq 1$ and fix $(i_1,\cdots,i_d)\in I^d$ with $x=\KF{i_1}\cdots\KF{i_d}x_0$.
Take arbitrary ($e\geq d$ and) $(j_1,\cdots,j_e)\in I^e$ such that $x=\KF{j_1}\cdots\KF{j_e}x_0$.
If $i_1=j_1$, then we have $d=e$ and $\sum_{k=2}^{d}i_k=\sum_{k=2}^{e}j_k$ because $\DIST(x':=\KF{i_2}\cdots\KF{i_d}x_0)<d$. 
Otherwise, thanks to the homogeneous local confluence, there exist $t\geq 2$ and $(p_1,\cdots,p_t),(q_1,\cdots,q_t)\in I^t$ such that
$p_1=i_1,q_1=j_1,\sum_{k=1}^{t}p_k=\sum_{k=1}^{t}q_k,\KE{p_t}\cdots\KE{p_1}x=\KE{q_t}\cdots\KE{q_1}x=:z\in X$.
By induction hypothesis, we have $\WT(z)+\sum_{k=2}^{t}p_k=\WT(\KE{p_1}x)$ and $\DIST(z)=\DIST(\KE{p_1}x)-(t-1)$.
Thus we can apply induction hypothesis to $\KE{q_1}x$ and we have $\WT(\KE{q_1}x)=\sum_{k=2}^{e}j_k=\WT(z)+\sum_{k=2}^{t}q_k$
that implies $\sum_{k=1}^{d}i_k=\sum_{k=1}^{e}j_k$.
\end{proof}

\begin{Rem}\label{wkc}
Continue on from Proposition \ref{confprop} and assume that $X$ satisfies (S2) further.
Fix $\lambda\in P^+$ such that $\forall i\in I,\langle h_i,\lambda\rangle=\varphi_i(x_0)$.
By induction on $\DIST(x)$,  Proposition \ref{confprop} implies that
the graph satisfies (K1) (see \S\ref{defkc}) by defining $\WTT(x)=\lambda-U(\WT(x))$ for $x\in X$, where $U:\mathbb{N}[I]\to P,\sum_{k}i_k\mapsto \sum_{k}\alpha_{i_k}$.
%and choose $\lambda\in P^+$ such that $\forall i\in I,\langle h_i,\lambda\rangle=\varphi_i(x_0)$.
%Fix $\lambda\in P^+$ such that $\forall i\in I,\langle h_i,\lambda\rangle=\varphi_i(x_0)$.
%Using (S2) inductively on $\DIST(x)$, Proposition \ref{confprop} implies that
%the graph satisfies (K1) (see \S\ref{defkc}) by defining $\WTT(x)=\lambda-U(\WT(x))$ for $x\in X$ where $U:\mathbb{N}[I]\to P,\sum_{k}i_k\mapsto \sum_{k}\alpha_{i_k}$.
\end{Rem}

The following uniqueness result is similar to ~\cite[Proposition 1.4]{Ste}.

\begin{Prop}\label{uniqprop}
For a symmetrizable GCM $A=(a_{ij})_{i,j\in I}$
%Let $A=(a_{ij})_{i,j\in I}$ be a symmetrizable GCM 
with $\forall i\ne\forall j\in I, A|_{i,j}=A_1\oplus A_1,A_2,B_2,\TRANS{B_2}$,
let $X$, $X'$ be $A$-regular graphs satisfying 
\begin{align*}
\forall i\ne\forall j\in I,A|_{i,j}=B_2\Rightarrow \textup{(S6),(S7),(S8),(S9)}
\end{align*}
with maximal elements $x_0\in X$, $x'_0\in X'$ respectively. 
%and assume both satisfy (S6)--(S9). 
If $\varphi_i(x)=\varphi_i(x')$ for all $i\in I$, there exists a unique $I$-colored directed graph isomorphism $X\ISOM X'$.
\end{Prop}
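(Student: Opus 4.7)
Following Stembridge's argument for \cite[Proposition 1.4]{Ste}, the plan is to construct $\psi\colon X \to X'$ by recursion along arrows: set $\psi(x_0) := x'_0$, and for any $x$ of positive depth pick some $i$ with $\KE{i}x \ne \ZERO$ and declare $\psi(x) := \KF{i}\psi(\KE{i}x)$. By simultaneous induction on $\DIST(x)$ I aim to prove (i) $\psi(x)$ is independent of the choice of $i$; (ii) $\varepsilon_k(\psi(x)) = \varepsilon_k(x)$ and $\varphi_k(\psi(x)) = \varphi_k(x)$ for all $k \in I$; and (iii) $\KF{k}\psi(x) \ne \ZERO \Leftrightarrow \KF{k}x \ne \ZERO$, with $\psi(\KF{k}x) = \KF{k}\psi(x)$ in that case. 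The symmetric recursion produces an inverse, so $\psi$ is an isomorphism. Uniqueness comes for free because any good graph satisfying (M1), (M2) has a unique maximum element (another such $y$ would by (M2) be reachable from $x_0$ by a nontrivial $\KF$-word, contradicting $\KE{i}y = \ZERO$ for all $i$), so any isomorphism must send $x_0$ to $x'_0$ and is then pinned down by (iii).

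The crux is (i). Given $\KE{i}x \ne \ZERO \ne \KE{j}x$ with $i \ne j$, I must exhibit a common predecessor $z$ reachable from $x$ by two sequences $(i_1,\dots,i_s)$ and $(j_1,\dots,j_s)$ of equal length with $i_1 = i$, $j_1 = j$, and matching multisets of indices---that is, the \emph{homogeneous local confluence} of Definition \ref{localhomogconf}. Granting this and applying the induction hypotheses (ii), (iii) to the intermediate elements (all of strictly smaller depth), the identity $\KE{i_s}\cdots\KE{i_1}x = \KE{j_s}\cdots\KE{j_1}x$ transfers to $X'$ as the analogous identity with $\psi(x)$ in place of $x$; reversing arrows via (G1), (G2) then yields $\KF{i}\psi(\KE{i}x) = \KF{j}\psi(\KE{j}x)$, establishing (i). Parts (ii), (iii) then follow from (S2) together with (K1)--(K4) applied to $\psi(x)$, exactly as in \cite[proof of Proposition 1.4]{Ste}.

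The technical bulk is thus to verify homogeneous local confluence from the axioms, by case analysis on $A|_{i,j}$. For $A_1 \oplus A_1$, (S2) and (S3) force $\Delta^e_{\varepsilon}(i,j,x) = 0$ and (A$^-_{i,j}$) in (S4) gives $\KE{i}\KE{j}x = \KE{j}\KE{i}x$. For $A_2$, (S4) exhausts all admissible $\Delta(x) \in \{(0,0),(0,1),(1,0),(1,1)\}$. For $B_2$, the admissible $\Delta(x)$ are $\{(0,0),(0,1),(1,0),(1,1),(0,2),(1,2),(2,1)\}$; (S4) covers $(0,0),(0,1),(1,0),(2,1)$, while (S8$'$), (S6), (S9) supply confluences for $(1,1),(1,2),(0,2)$ respectively (the $e$-version of (S9) being obtained via Remark \ref{henkanef}). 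The case $A|_{i,j} = \TRANS{B_2}$ is symmetric.

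The main obstacle will be the bookkeeping in the $B_2$ case: verifying that each of the three subcases (P$^-_1$), (Q$^-_1$), (R$^-$) of (D$^-$) produces a confluence word beginning with $\KE{i}$ and with $\KE{j}$ whose index multisets agree, and that the side conditions imposed in (S8), (S9) (such as $\varphi_i(x) \geq 2$, or $\Delta^f_{\varphi}(j,i,\KF{i}^2x) = 0$) are either automatic from the ambient configuration or can be reduced to the opposite side via Proposition \ref{bsym}. Once this confluence table is complete, the remainder of the argument is Stembridge's standard induction.
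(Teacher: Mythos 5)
Your overall architecture (depth induction defining $\psi(x)=\KF{i}\psi(\KE{i}x)$, with well-definedness reduced to a confluence statement at each $x$ admitting two incoming colors) is the same as the paper's. But the central step is not handled correctly. You write that once homogeneous local confluence of $X$ is established, "the identity $\KE{i_s}\cdots\KE{i_1}x=\KE{j_s}\cdots\KE{j_1}x$ transfers to $X'$" by applying the induction hypotheses to the intermediate elements. It does not: the induction hypothesis is an isomorphism onto $\bigsqcup_{\ell<k}X'_{\ell}$ only, so it identifies $\psi(z)$ and the two ascending paths from $\psi(\KE{i}x)$, $\psi(\KE{j}x)$ up to $\psi(z)$, but it says nothing about arrows from depth $k-1$ into depth $k$ --- which is precisely what is being constructed. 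What is needed is that the \emph{same} confluence relation holds in $X'$ below $\psi(z)$, and this must be forced by the \emph{downward} axioms ((A$^+$),(B$^+$) in (S5), and (S7),(S8),(S9) in the $B_2$ case) applied \emph{in $X'$} at $\psi(z)$ (resp.\ $\psi(y')$), using that their hypotheses are local quantities already matched by $\psi$ on lower depths. This is also why (D$^-$) must \emph{determine}, via the trichotomy on $\Delta''=(\Delta^f_{\varphi}(i,j,y),\Delta^f_{\varphi}(i,j,y'))$, which of (P$^-_1$),(Q$^-_1$),(R$^-$) occurs: that tells you which downward axiom to invoke in $X'$. A priori $X$ and $X'$ could realize different confluence patterns at configurations with identical lower structure, so homogeneous local confluence of $X$ alone cannot close the argument; your proposal never isolates this transfer mechanism.

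There are also concrete slips in the case table. For $A|_{i,j}=B_2$ the admissible $\Delta(x)$ are the six pairs in $\{0,1\}\times\{0,1,2\}$; $(2,1)$ does not occur (it belongs to the $\TRANS{B_2}$ ordering of the pair). The case $(0,2)$ has $\Delta^e_{\varepsilon}(i,j,x)=0$, so its basic confluence is already given by (A$^-_{i,j}$) in (S4); (S9) and (S8) are not the sources of confluence for $(0,2)$ and $(1,1)$ but are the downward counterparts of (R$^-$) and (P$^-_1$), used only inside the $\Delta(x)=(1,2)$ analysis. Finally, Proposition \ref{bsym} is a statement about $B(\lambda)$ proved via $-w_0\lambda=\lambda$; it is not available for an abstract $A$-regular graph $X$, so it cannot be used to "reduce side conditions to the opposite side" in this proof.
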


\begin{proof}
Uniqueness is obvious because $\forall x\in X,\exists (i_1,\cdots,i_s)\in I^s,\KF{i_1}\cdots\KF{i_s}x_0=x$.
To prove existence, we will construct a bijection $\EFU_k:X_k\ISOM X'_k$ such that
\begin{enumerate}
\item[(1$_k$)] $\bigsqcup_{\ell=0}^{k}\EFU_{\ell}:\bigsqcup_{\ell=0}^{k}X_{\ell}\ISOM\bigsqcup_{\ell=0}^{k}X'_{\ell}$ 
is a $I$-colored 
directed graph isomorphism,% (ignoring arrow from $X_k$ to $X_{k+1}$)
\item[(2$_k$)] $\forall x\in X_k,\forall i\in I,\varphi_i(x)=\varphi_i(\EFU_{k}(x)),\varepsilon_i(x)=\varepsilon_i(\EFU_{k}(x))$.
\end{enumerate}
by induction on $k$, where \mbox{$X_k=\{x\in X\mid \DIST(x)=k\},X'_k=\{x\in X'\mid \DIST(x)=k\}$.}

For $k=0$, the only choice is $\EFU_0(x_0)=x'_0$ and (2$_0$) is trivial.
For $k\geq1$, we define $\EFU_k(x)=\KF{i}\EFU_{k-1}(\KE{i}x)$ if $\KE{i}x\ne\ZERO$.
$\EFU_k(x)$ is well-defined because of (X),(Y),(Z).
\begin{itemize}
\item[(X)] $\forall x\in X_{k},\exists i\in I,\KE{i}x\in X_{k-1}$ by Proposition \ref{confprop}.
\item[(Y)] $\KF{i}\EFU_{k-1}(\KE{i}x)\ne\ZERO$ because $\varphi_i(\EFU_{k-1}(\KE{i}x))=\varphi_i(\KE{i}x)>0$ by (2$_{k-1}$).
\item[(Z)] For $i\ne j\in I$ with $\KE{i}x\ne\ZERO\ne\KE{j}x$, we will show $\KF{i}\EFU_{k-1}(\KE{i}x)=\KF{j}\EFU_{k-1}(\KE{j}x)$ case by case
as follows.
%Then, we exclusively have (A$^-$),(B$^-$),(P$^-_1$),(Q$^-$)<,(R$^-_1$).
%We will show $\KF{i}\EFU_{k-1}(\KE{i}x)=\KF{j}\EFU_{k-1}(\KE{j}x)$ case by case as follows.
\end{itemize}

When $A|_{i,j}=A_1\oplus A_1,A_2$, (Z) is in the proof of ~\cite[Proposition 1.4]{Ste} (or similar to the arguments below).
So let us $A|_{i,j}=B_2$. By (S2),(S3), possibilities of $\Delta(x)=(\Delta^e_{\varepsilon}(i,j,x),\Delta^e_{\varepsilon}(j,i,x))$ are $\Delta(x)=(0,0),(1,0),(0,1),(1,1),(0,2),(1,2)$\footnote{As ~\cite[p.4821]{Ste}, cases $(0,0),(0,1)$ never occur by ~\cite[Lemma 2.5]{Ste}. Yet we do not need it.}.
Among them, cases $\Delta(x)=(0,0),(1,0),(0,1),(1,1),(0,2)$, (Z) is again the same as in the proof of ~\cite[Proposition 1.4]{Ste} (or similar to the arguments below).
Thus, we assume $\Delta(x)=(1,2)$. % further.
By (D$^-$) in (S6), $\exists y=\KE{i}^2\KE{j}x\in X_{k-3},\exists y'=\KE{i}^2\KE{j}^2\KE{i}x\in X_{k-5}$.
Again (S2),(S3) imply $\Delta''=(\Delta^f_{\varphi}(i,j,y),\Delta^f_{\varphi}(i,j,y'))=(0,0),(1,0),(0,1),(1,1)$.
%In the rest, we put $\Delta'(\ast)=(\Delta^f_{\varphi}(i,j,\ast),\Delta^f_{\varphi}(j,i,\ast))$.

Assume $\Delta''=(0,1)$. By (Q$^-_1$) in (D$^-$) in (S6), 
we have $\exists z=\KE{j}\KE{i}^3\KE{j}^2\KE{i}x=\KE{i}\KE{j}^2\KE{i}^3\KE{j}x\in X_{k-7},\Delta'(z)=(1,2)$
and as in Remark \ref{luck} $(\Delta^e_{\varepsilon}(i,j,\KF{i}^2\KF{j}z),\Delta^e_{\varepsilon}(i,j,\KF{i}^2\KF{j}^2\KF{i}z))=(0,1)$.
Then, by induction hypothesis and (S7), we have $\KF{j}\KF{i}^3\KF{j}^2\KF{i}\EFU_{k-7}(z)=\KF{i}\KF{j}^2\KF{i}^3\KF{j}\EFU_{k-7}(z)$.
Since $\EFU_{k-1}(\KE{i}x)=\KF{j}^2\KF{i}^3\KF{j}\EFU_{k-7}(z)$ and $\EFU_{k-1}(\KE{j}x)=\KF{i}^3\KF{j}^2\KF{i}\EFU_{k-7}(z)$, we are done.

Assume $\Delta''=(0,0)$. By (R$^-$) in (D$^-$) in (S6), 
we have $\KF{j}\KF{i}^2y'\ne\ZERO,\KF{j}y'=\KE{i}y$ and $\Delta^f_{\varphi}(j,i,\KF{i}^2y')=0,\Delta'=(0,2)$, where $\Delta'=(\Delta^f_{\varphi}(i,j,y'),\Delta^f_{\varphi}(j,i,y'))$. 
By induction hypothesis and (S9), we have $\KF{i}\KF{j}^2\KF{i}^2\EFU_{k-5}(y')=\KF{j}\KF{i}^3\KF{j}\EFU_{k-5}(y')=\KF{j}\KF{i}^2\EFU_{k-3}(y)$.
Since $\EFU_{k-1}(\KE{i}x)=\KF{j}^2\KF{i}^2\EFU_{k-5}(y')$ and $\EFU_{k-1}(\KE{j}x)=\KF{i}^2\EFU_{k-3}(y)$, we are done.

Assume $\Delta''=(1,1)$. By (P$^-_1$) in (D$^-$) in (S6), 
we have $\KF{i}^2y'\ne\ZERO$, $\KF{j}y'=\KE{i}y$, $\Delta'=(1,1)$.
Apply induction hypothesis and (S8), the rest is the same. % as the previous.

%Assume $\Delta''=(1,1)$. By (P$^-_1$) in (D$^-$) in (S6), we have $y'=\KE{j}\KE{i}^3\KE{j}x$,\mbox{$\Delta'(y')=(1,1)$.}
%By (C$^+_1$) in (S8), we have $\KF{i}\KF{j}^2\KF{i}^2\EFU_{k-5}(y')=\KF{j}\KF{i}^3\KF{j}\EFU_{k-5}(y')=\KF{j}\KF{i}^2\EFU_{k-3}(y)$.
%The rest is the same.

Because $\Delta''\ne(1,0)$ by (D$^-$) in (S6), (Z) is proved and thus $\EFU_k$ is well-defined.

Finally, we show (1$_k$) and (2$_k$).
$\EFU_k$ is surjective because for any $x'\in X'_{k}$ with $\KE{i}x'=y'\in X'_{k-1}$ we have $\EFU_{k}(\KF{i}\EFU_{k-1}^{-1}(y'))=x'$.
By symmetry we now know that $|X_k|= |X'_{k}|$ and $\EFU_k$ is a bijection. %, i.e., (1$_k$).
For (2$_k$), by (1$_k$) we have $\forall x\in X_k,\forall i\in I,\varepsilon_i(x)=\varepsilon_i(\EFU_{k}(x))$.
Then, $\forall x\in X_k,\forall i\in I,\varphi_i(x)=\varphi_i(\EFU_{k}(x))$ follows from Remark \ref{wkc} because
by construction $\EFU_{k}$ is $\WT$-preserving.
\end{proof}

%\vspace{2mm}

%\noindent{\bf Acknowledgments.} The author thanks Hironori Oya, Naoki Fujita and Travis Scrimshaw for helpful discussions.
%He is also grateful to anonymous referees for many kind comments and Motoki Takigiku for 
%teaching me the example in Remark \ref{dkkerror2}.
%that led to considerable improvement of the presentation.
%He was supported by JSPS Kakenhi Grants 17K14154.

\end{document}